\newtheorem{theorem}{Theorem}[section]
\newtheorem{proposition}[theorem]{Proposition}
\newtheorem{corollary}[theorem]{Corollary}
\newtheorem{lemma}[theorem]{Lemma}
\newtheorem{claim}[theorem]{Claim}
\theoremstyle{definition}
\newtheorem{remark}[theorem]{Remark}
\newtheorem{definition}[theorem]{Definition}
\newtheorem{example}[theorem]{Example}
\def\E{\mathbb{E}}
\def\IR{\mathbb{R}}
\def\IZ{\mathbb{Z}}
\def\IN{\mathbb{N}}
\def\IB{\mathcal{B}}
\def\II{\mathcal{I}}
\def\al{\alpha}
\def\ga{\gamma}
\def\Ga{\Gamma}
\def\si{\sigma}
\def\Tt{\tilde{T}_{d-1}}
\def\Gat{\tilde{\Gamma}}
\DeclareMathOperator{\Aut}{Aut}
\DeclareMathOperator{\var}{var}
\DeclareMathOperator{\cov}{cov}
\DeclareMathOperator{\corr}{corr}
\DeclareMathOperator{\dist}{dist}
\newcommand{\defeq}{\mathrel{\vcenter{\baselineskip0.5ex \lineskiplimit0pt
                     \hbox{\scriptsize.}\hbox{\scriptsize.}}}%
                     =}
\def\ind{\mathbbm{1}} 
\begin{document}

\title[Correlation bound for distant parts of FIID processes]
{Correlation bound for distant parts\\of factor of IID processes}

\author[Backhausz]{\'{A}gnes Backhausz}
\address{E\"otv\"os Lor\'and University, Department of Probability and Statistics 
H-1117 Budapest, P\'azm\'any P\'eter s\'et\'any 1/c; 
and MTA Alfr\'ed R\'enyi Institute of Mathematics 
H-1053 Budapest, Re\'altanoda utca 13-15}
\email{agnes@cs.elte.hu}

\author[Gerencs\'{e}r]{Bal\'{a}zs Gerencs\'{e}r}
\address{MTA Alfr\'ed R\'enyi Institute of Mathematics 
H-1053 Budapest, Re\'altanoda utca 13-15;
and E\"otv\"os Lor\'and University, Department of Probability and Statistics 
H-1117 Budapest, P\'azm\'any P\'eter s\'et\'any 1/c}
\email{gerencser.balazs@renyi.mta.hu}

\author[Harangi]{Viktor Harangi}
\address{MTA Alfr\'ed R\'enyi Institute of Mathematics 
H-1053 Budapest, Re\'altanoda utca 13-15}
\email{harangi@renyi.hu}

\author[Vizer]{M\'{a}t\'{e} Vizer}
\address{MTA Alfr\'ed R\'enyi Institute of Mathematics 
H-1053 Budapest, Re\'altanoda utca 13-15}
\email{vizermate@gmail.com}

\thanks{
The first author was partially supported by the Hungarian Scientific Research 
Fund (OTKA, grant no. K109684), and the MTA R\'enyi Institute ``Lend\"ulet'' Limits 
of Structures Research Group. 
The third author was supported by Marie Sk{\l}odowska-Curie Individual Fellowship Grant No.\ 661025 
and the MTA R\'enyi Institute ``Lend\"ulet'' Groups and Graphs Research Group. 
The fourth author was supported by Hungarian National Scientific Fund, grant number: SNN-116095, 
and was partially supported by ERC Consolidator Grant 648017.}

\keywords{Factor of IID, invariant processes, regular tree, correlation, 
tail sigma-algebra, non-backtracking operator}

\subjclass[2010]{60K35, 37A50}

\date{}

\begin{abstract}
We study factor of i.i.d.\ processes on the $d$-regular tree for $d \geq 3$. 
We show that if such a process is restricted to two distant connected subgraphs of the tree, 
then the two parts are basically uncorrelated. More precisely, 
any functions of the two parts have correlation at most 
$k(d-1) / (\sqrt{d-1})^k$, where $k$ denotes the distance of the subgraphs.
This result can be considered as a quantitative version of the fact 
that factor of i.i.d.\ processes have trivial 1-ended tails. 
\end{abstract}

\maketitle


\section{Introduction}


This paper deals with factor of i.i.d.\ processes
on the $d$-regular tree $T_d$ for $d\geq 3$.
Loosely speaking, we first put independent and identically distributed
(say $[0,1]$ uniform) random labels on the vertices of $T_d$;
then each vertex gets a new label that depends on
the labelled rooted graph as seen from that vertex,
all vertices ``using the same rule''.

For a formal definition, let $V(T_d)$ denote the vertex set and
$\Aut(T_d)$ the automorphism group of $T_d$. Suppose that $M$ is a measurable space
and $F \colon [0,1]^{V(T_d)} \to M^{V(T_d)}$ is a measurable function.
Then $F$ is said to be an $\Aut(T_d)$-factor (or factor in short)
if it is $\Aut(T_d)$-equivariant, that is,
it commutes with the natural $\Aut(T_d)$-actions.
(In most applications $M$ is either a discrete set or $\IR$.)
Let $\pi \colon M^{V(T_d)} \to M$ denote
the coordinate projection corresponding to a distinguished vertex.
Then the function $f = \pi \circ F \colon [0,1]^{V(T_d)} \to M$
(often called the \emph{rule})
will be invariant under the stabilizer of the distinguished vertex.
It is easy to see that $F$ is determined by $f$.

If we have an i.i.d.\ process $Z = \left( Z_v \right)_{v \in V(T_d)}$
on $[0,1]^{V(T_d)}$, then applying $F$ yields a factor of i.i.d.\ process $X = F(Z)$,
which can be viewed as a collection $X = \left( X_v \right)_{v \in V(T_d)}$ of
$M$-valued random variables. It follows immediately from the definition
that the distribution of $X$ is invariant under the action of $\Aut(T_d)$;
in particular, each $X_v$ has the same distribution.

A natural question is ``how independent'' the random variables $X_v$ are.
It is fairly easy to see that for any factor
the correlation of $X_u$ and $X_v$ converges to $0$ as
the distance of $u$ and $v$ goes to infinity.
In \cite{cordec} the following sharp bound was found for the correlation:
\begin{equation} \label{eq:corr_decay_for_vertices}
\left| \corr( X_u, X_v ) \right| \leq
\left( k+1 - \frac{2k}{d} \right) \left( \frac{1}{ \sqrt{d-1} } \right)^k
\mbox{, where } k = \dist(u,v) ,
\end{equation}
that is, the rate of the correlation decay is essentially $1 / (\sqrt{d-1})^k$.
(Here it is assumed that $M=\IR$ and $\var X_v < \infty$.)

\subsection{Results}
The main result of this paper basically says that
if two connected subsets $V_1, V_2 \subset V(T_d)$ have large distance,
then they are ``almost independent'' in the following sense:
for an arbitrary factor $X$, any function of $\left( X_v \right)_{v \in V_1}$
and any function of $\left( X_v \right)_{v \in V_2}$ have small correlation,
essentially of (the optimal) order $1 / (\sqrt{d-1})^k$.
\begin{theorem} \label{thm:main}
Let $X = \left( X_v \right)_{v \in V(T_d)}$ be a factor of i.i.d.\ process
on $M^{V(T_d)}$ for some measurable space $M$,
and let $V_1, V_2 \subset V(T_d)$ be arbitrary (possibly infinite) subsets of the vertex set.
Suppose that $h_i \colon M^{V_i} \to \IR$ is a measurable function, $i=1,2$.
For $h_i\left( \left( X_v \right)_{v \in V_i} \right)$ we simply write $h_i(X)$.
If $h_1(X)$ and $h_2(X)$ have finite variances, then we have
\begin{equation} \label{eq:corr_of_subgraphs}
\left| \corr( h_1(X) , h_2(X) ) \right| \leq
k(d-1) \left( \frac{ 1 }{ \sqrt{d-1} } \right)^k,
\end{equation}
where $k$ denotes the distance of the convex hulls of $V_1$ and $V_2$,
which we assume to be positive.
(The convex hull of $V_i$ is the smallest
connected subgraph of $T_d$ containing $V_i$.)
\end{theorem}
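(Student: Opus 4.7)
The plan is to reduce Theorem~\ref{thm:main} to a spectral estimate for the non-backtracking operator on the directed edges of $T_d$, extending the approach of \cite{cordec}. First, by a standard $L^2$-density argument, I would assume that $h_1, h_2$ are bounded and centered, and each depends on only finitely many coordinates of $X$; and, by approximating the factor map $F$ by factors of bounded range, each $X_v$ becomes a finite-range function of the i.i.d.\ labels $Z$. These reductions should not affect the final constants. The relevant geometry is then as follows: the convex hulls $C_1, C_2$ are finite subtrees at distance $k$, joined by a unique geodesic $u_1 = w_0, w_1, \ldots, w_k = u_2$ with $u_i \in C_i$. The form $k(d-1)(\sqrt{d-1})^{-k}$ of the target bound indicates that the decay $(\sqrt{d-1})^{-k}$ should come from a spectral-radius estimate for a non-backtracking walk of length $k$ between $C_1$ and $C_2$, while the prefactor $k(d-1)$ should absorb the norms of two ``boundary vectors'' built from $h_1$ and $h_2$.

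The core of the argument should be to realize the covariance as
\[
\cov(h_1(X), h_2(X)) = \langle f_1, B^k f_2 \rangle,
\]
where $B$ is the non-backtracking operator on $\ell^2$ of the directed edges of $T_d$, and $f_1, f_2$ are functions supported on directed edges emanating from $C_1$ and $C_2$ respectively, satisfying norm bounds of the form $\|f_i\|_2 \leq c_i \|h_i - \E h_i\|_2$ with $c_1 c_2 \lesssim k(d-1)$. Combining this with the spectral radius bound for $B$ and the Cauchy--Schwarz inequality should then yield \eqref{eq:corr_of_subgraphs}; the $k$ in the prefactor should materialize from boundary contributions along the $k$ vertices of the geodesic, while the $(d-1)$ reflects the non-backtracking branching at each such vertex.

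The \emph{main obstacle} is the construction of the boundary vectors $f_1, f_2$. Unlike the pair bound \eqref{eq:corr_decay_for_vertices}, where each side is essentially a single value $X_v$, here $h_i$ is an arbitrary nonlinear functional of the process on a large set. I expect that one will need an inductive ``peeling'' procedure along $C_i$: at each step, decompose $h_i$ via a conditional expectation into a truncated function (living on a smaller subtree) together with a boundary correction, accumulating the correction into $f_i$. Controlling the $L^2$-norm of $f_i$ across these steps, and verifying that the resulting inner-product identity is exact, will be the technical heart of the proof.
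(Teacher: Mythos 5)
Your high-level instinct---reduce to a non-backtracking spectral estimate as in \cite{cordec}---matches the paper's strategy, but the construction you propose for getting there has a genuine gap, and your accounting of where the constants come from is incorrect.

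The ``main obstacle'' you identify (how to turn an arbitrary nonlinear functional $h_i$ of a large subtree into something living on directed edges) is exactly what the paper solves, but by a mechanism quite different from the ``peeling'' you sketch. After reducing to the case where $V_1, V_2$ are copies of the rooted $(d-1)$-ary tree $\Tt$ (Step 1), the paper makes two symmetrization reductions that you don't have: first, an exchangeability argument (Lemma~\ref{lem:2functions}) shows it suffices to take $h_1$ and $h_2$ to be the \emph{same} function $f$; second, averaging $f$ over $\Aut(\Tt)$ (the Haar measure) shows one may assume $f$ is $\Aut(\Tt)$-invariant. Once $f$ is invariant, there is no peeling needed at all: one simply defines a new factor of i.i.d.\ process $Y$ on the directed edges by setting $Y_e = f\bigl((X_v)_{v \in V(T_e)}\bigr)$, where $T_e$ is the subtree behind $e$. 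Then $h_i(X) = f(X_i) = Y_{e_i}$ on the nose, and the whole problem reduces to a clean edge-correlation bound (Theorem~\ref{thm:edge_corr}) for a single factor of i.i.d.\ process on $\IR^{E(T_d)}$. Your proposed decomposition via conditional expectations along $C_i$, accumulating boundary corrections into $f_i$, is not developed and it is not clear it can be made exact or that the norms would be controlled; the paper's symmetrization route avoids it entirely.

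Two further points. First, the identity you want, $\cov(h_1(X),h_2(X)) = \langle f_1, B^k f_2\rangle$ with $B$ the non-backtracking operator on $\ell^2(E(T_d))$, cannot hold as written: the covariance is an integral over the probability space $M^{V(T_d)}$, not a pairing on $\ell^2(E(T_d))$. The correct object is a ``generalized'' non-backtracking operator $\IB$ acting on $L^2$ of the i.i.d.\ space; after decomposing the Koopman representation, its restriction to mean-zero functions is unitarily equivalent to $B$ restricted to an invariant subspace, and one obtains $(d-1)^k\cov(Y_{e_1},Y_{e_2}) = \langle f, \IB^k f\rangle$ where $f$ is the rule for $Y$. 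Second, your explanation of the prefactor $k(d-1)$ is wrong: it does not ``absorb the norms of two boundary vectors.'' It comes entirely from the operator-norm estimate $\|B^k\| \le (k+1)(\sqrt{d-1})^{k+1}$ (Theorem~\ref{thm:Bnorm}), which is polynomially worse than $(\sqrt{d-1})^k$ because $B$ is not normal; the spectral radius alone would only give the weaker $(1+o(1))^k(\sqrt{d-1})^{-k}$.
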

One might wonder if a similar bound could exist
if $k$ denoted the distance of $V_1$ and $V_2$
instead of the distance of the convex hulls.
The simplest case where this would make a difference is
when $V_1$ consists of the two endpoints of a path of length $2k$
and $V_2$ is the one-element set containing the midpoint of this path.
Then the distance of $V_1$ and $V_2$ is $k$, while the distance of the convex hulls is $0$.
In this case the above theorem would be of no use.
Can we still have a good bound for the correlation?
The answer is negative, as the correlation might actually be $1$ in this case for any $k$.
This will be shown by Example \ref{ex},
where we will construct a factor of i.i.d.\ process $X$ on $[0,1]^{V(T_d)}$
with the property that $X_u$ and $X_v$ determine
the values of $X$ along the whole path connecting $u$ and $v$.
In fact, this process will show that 
\eqref{eq:corr_of_subgraphs} is essentially sharp: 
for any $V_1, V_2$ there exist $h_1, h_2$
such that the correlation in question is of order $1 / (\sqrt{d-1})^k$
where $k$ is the distance of the convex hulls. 

Given an invariant process $\left( X_v \right)_{v \in V(T_d)}$,
an event is in the \emph{tail} of the process if it is, for each $r \in \IN$,
contained by the $\si$-algebra generated by the random variables $X_v$
for vertices $v$ outside the $r$-ball around a fixed root.
It is open whether any invariant process with a trivial tail 
can be obtained as some factor of an i.i.d.\ process. 
As for the other direction, there are examples for factor of i.i.d.\ processes
whose tail is not trivial. In fact, they can even have full tails.
(See \cite[Proposition 2.4]{lyons} or Example \ref{ex} of the current paper.)
Our result is related to another kind of tails called the \emph{1-ended tails}: 
given an infinite path starting at the root,
consider those events that are, for each $r \in \IN$, contained by
the $\si$-algebra generated by $X_v$'s
for vertices $v$ of $T_d$ that are separated
from the root by the $r$th vertex of the path. 

Let us consider Theorem \ref{thm:main} in the special case 
when $h_1, h_2$ are indicator functions of two events 
with one event being fixed and the other running through a sequence of events 
in such a way that the distance $k$ goes to infinity. 
Then we obtain that the sequence is ``asymptotically independent'' from the fixed event. 
This is actually equivalent to the triviality of the 1-ended tails, see Remark \ref{rm:eq}. 
Therefore the following is an immediate consequence of Theorem \ref{thm:main}.
\begin{corollary} \label{cor:1-ended}
The 1-ended tail $\sigma$-algebras are trivial 
for any factor of i.i.d.\ process on $M^{V(T_d)}$ for $d \geq 3$.
\end{corollary}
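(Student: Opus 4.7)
The plan is to deduce the corollary directly from Theorem \ref{thm:main} by specializing $h_1, h_2$ to indicator functions. Fix an infinite ray $v_0, v_1, v_2, \ldots$ starting at the root, let $A$ be an event in the associated 1-ended tail $\sigma$-algebra, and for each $r \in \mathbb{N}$ write $V_2^{(r)}$ for the set of vertices separated from the root by $v_r$. By the defining property of the 1-ended tail, $\ind_A$ is measurable with respect to $\sigma\bigl((X_v)_{v \in V_2^{(r)}}\bigr)$ for every $r$. The strategy is to show that $A$ is independent of every \emph{cylinder event} $B$, meaning an event depending on $(X_v)_{v \in V_1}$ for some finite $V_1 \subset V(T_d)$. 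A standard Dynkin $\pi$-$\lambda$ argument then upgrades this to $A$ being independent of the entire process $\sigma$-algebra, hence independent of itself, so that $P(A) = P(A)^2 \in \{0,1\}$.

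To establish the asserted independence, fix such a $B$. For all $r$ large enough that $V_1$ lies inside the ball of radius $r-1$ around the root, the convex hull of $V_2^{(r)}$ is entirely contained in the one-ended subtree hanging off $v_r$, and its distance $k = k(r)$ from $V_1$ satisfies $k(r) \to \infty$. Applying Theorem \ref{thm:main} with $h_1 = \ind_B$ and $h_2 = \ind_A$ yields
\[
\bigl|\corr(\ind_A(X), \ind_B(X))\bigr| \leq k(r)(d-1)(d-1)^{-k(r)/2} \xrightarrow[r \to \infty]{} 0.
\]
Since the left-hand side is a fixed number (independent of $r$), it must vanish, so $\cov(\ind_A, \ind_B) = 0$, equivalently $P(A \cap B) = P(A)\,P(B)$. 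The degenerate cases where $\ind_A$ or $\ind_B$ has zero variance give independence trivially.

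There is no real obstacle here; Theorem \ref{thm:main} already contains all the essential content, and the present deduction is a short measure-theoretic consequence. The only points to verify are that the convex hull of $V_2^{(r)}$ stays on the far side of $v_r$ (immediate because $V_2^{(r)}$ itself is a connected subtree lying entirely beyond $v_r$) and that its distance from the finite set $V_1$ grows unboundedly with $r$, after which the $\pi$-$\lambda$ passage from cylinder events to the full $\sigma$-algebra is routine.
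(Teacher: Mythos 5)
Your proof is correct and follows essentially the same route as the paper: represent the 1-ended tail event as measurable with respect to vertices beyond $v_r$, apply Theorem \ref{thm:main} against a finite cylinder event, let $r \to \infty$ to kill the correlation, and then upgrade to independence from the full $\sigma$-algebra via the Dynkin $\pi$-$\lambda$ lemma. The only difference is cosmetic (you swap the roles of $V_1$ and $V_2$ relative to the paper's notation).
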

As pointed out by Russell Lyons \cite[Section 2]{lyons}, 
this corollary was known more generally: Pemantle showed that 
basically every ergodic invariant process on $T_d$ 
has trivial 1-ended tails \cite[Theorem 1]{pemantle}. 
See \cite[Corollary 7]{pemantle} for an equivalent formulation 
(using asymptotic independence as described above). 
Our main theorem provides a quantitative version of this formulation 
in the case of factor of i.i.d.\ processes: 
we obtain a universal bound for an arbitary factor and 
for arbitrary events (only depending on the distance $k$). 
It is also worth mentioning that the triviality of 1-ended tails implies mixing and 
a weak law of large numbers \cite[Corollary 8-10]{pemantle}. 

Corollary \ref{cor:1-ended} will be complemented by Example \ref{ex}, 
where we construct a factor of i.i.d.\ process for which, 
loosely speaking, ``any tail broader than a 1-ended tail'' is non-trivial. 
This means that Corollary \ref{cor:1-ended} is, in some sense, the best we can hope for. 

Note that Corollary \ref{cor:1-ended} is also true for $d=2$ (that is, on $\IZ$) 
by a result of Rokhlin and Sinai \cite{rokhlin1961construction}. 

We finish this section with a brief outline of the proof of Theorem \ref{thm:main}.
Let $\Tt$ denote the rooted $(d-1)$-ary tree.
It is easy to see that we might assume that
$V_1$ and $V_2$ are both isomorphic to $\Tt$
and their roots have distance $k$. Let $e_i$ be the directed edge 
starting at the root of $V_i$ and ``pointing away'' from $V_i$.
We will show that $\left| \corr( h_1(X) , h_2(X) ) \right|$ is maximized
by functions $h_1$ and $h_2$ that are invariant
under the automorphism group of $\Tt$.
In fact, they should ``come from'' the same
$\Aut(\Tt)$-invariant measurable function $f \colon M^{V(\Tt)} \to \IR$.
Given such a function $f$,
an $\Aut(T_d)$-invariant process on the vertices of $T_d$ can be turned into
an $\Aut(T_d)$-invariant process on the directed edges of $T_d$:
for any directed edge $e$ of $T_d$,
apply $f$ to the (labelled) subgraph ``behind'' $e$ and write its value on $e$.
The process we obtain on the directed edge set $E(T_d)$
will be a factor of i.i.d.\ process (see Section \ref{sec:2} for precise definitions)
whose value on $e_i$ is $h_i(X)$, $i=1,2$.
Therefore to complete the proof we need to prove a correlation decay result
similar to \eqref{eq:corr_decay_for_vertices}
but for directed edges instead of vertices (see Theorem \ref{thm:edge_corr}).
To this end we will need to estimate the norms of the powers
of the non-backtracking operator (see Section \ref{sec:4.1}).

\subsection{Related work}
Factor of i.i.d.\ processes can be viewed from an ergodic theoretic point of view,
namely, as factors of the Bernoulli shift. $\IZ$-factors (as part of
classical ergodic theory) have the largest literature and the most complete theory.
For amenable group actions entropy serves as a complete invariant (for isomorphism of
i.i.d.\ processes). A classical example of Ornstein and Weiss \cite{ornstein_weiss_1987}
expresses the $4$-shift as a factor of the $2$-shift over free groups of rank at least $2$, 
showing that no notion of entropy can exist in the non-amenable case
that would exhibit all the nice properties of the classical Kolmogorov-Sinai entropy.
Nevertheless, various definitions of entropy have been introduced
and examined in relation to factor maps, see e.g.\ \cite{bowen, sofic, kerr, brandon}.

One of the reasons why factor of i.i.d.\ processes have attracted a growing attention
in recent years is that they give rise to some sort of randomized local algorithms
that can be carried out on arbitrary regular graphs with ``large essential girth'',
e.g.\ random regular graphs. Such factor of i.i.d.\ constructions include
perfect matchings \cite{csokalipp, lyons_nazarov}, independent sets \cite{endreuj, E, V, hoppen},
$4$-regular spanning forests \cite{damien, kun}, colorings \cite{robin}.
Using this connection to random regular graphs in the reverse direction,
one can prove entropy inequalities \cite{invtree, bowen}
yielding necessary conditions for a process to be factor of i.i.d.;
this new tool has several applications already \cite{gamarnik, mustazee, mustazeebalint}.
Correlation bounds provide further necessary conditions \cite{cordec};
this is the main tool in the current paper as well. 
In \cite{spec} the possible ``correlation structures'' were described
for factor of i.i.d.\ processes by understanding their spectral measures.
See \cite{lyons} for futher references and for many open problems in the topic.


\subsection*{Outline of the paper}
The rest of the paper is structured as follows.
In Section \ref{sec:2} we go through basic definitions,
present some examples, and show how Corollary \ref{cor:1-ended} follows
from Theorem \ref{thm:main}, the proof of which is given in Section \ref{sec:3}.
Finally, in Section \ref{sec:4} we prove our correlation decay result for directed edges
via bounding the norms of the powers of the non-backtracking operator.


\section{Preliminaries} \label{sec:2}

\subsection{Factors of i.i.d.}

Suppose that a group $\Ga$ acts on a countable set $S$.
Then $\Ga$ also acts on the space $M^S$ for a set $M$:
for any function $f$ defined on $S$ and for any $\ga \in \Ga$ let
\begin{equation} \label{eq:action}
(\ga \cdot f)(s) \defeq f( \ga^{-1} \cdot s) \quad \forall s \in S .
\end{equation}
In our setting $M$ is always equipped with a $\si$-algebra
(that is, $M$ is a measurable space).
As usual, the product space $M^S$ is equipped
with the smallest $\si$-algebra such that
all $M^S \to M$ coordinate projections are measurable.
This way a $\cdot \to M^S$ function is measurable
if and only if each coordinate function is measurable.
First we define the notion of factor maps.
\begin{definition}
Let $M_1,M_2$ be measurable spaces and
$S_1,S_2$ countable sets with a group $\Ga$ acting on both.
A measurable mapping $F \colon M_1^{S_1} \to M_2^{S_2}$ is
said to be a \emph{$\Ga$-factor} if it is $\Ga$-equivariant,
that is, it commutes with the $\Ga$-actions.

Whenever $\Ga$ acts transitively on $S_2$,
$F$ is determined by the function $f = \pi_o \circ F \colon M_1^{S_1} \to M_2$,
where $\pi_o \colon M_2^{S_2} \to M_2$ is the projection
corresponding to some distinguished element $o \in S_2$.
There is a one-to-one correspondence between
measurable $\Ga$-equivariant mappings $F \colon M_1^{S_1} \to M_2^{S_2}$
and measurable functions $f \colon M_1^{S_1} \to M_2$
that are invariant under the stabilizer of $o$.
\end{definition}
Next we explain what we mean by processes on $M^S$.
\begin{definition}
A probability measure on $M^S$ that is invariant under the $\Ga$-action
is called an \emph{invariant process}. The simplest examples are
\emph{i.i.d.\ processes}: take a probability measure $\mu$ on $M$
and consider the product measure $\nu = \mu^S$ on $M^S$.
(The $\Ga$-action on $(M^S, \mu^S)$ is
often called the \emph{generalized Bernoulli shift}.)
Given an i.i.d.\ process $\nu$ on $M_1^{S_1}$ and
a $\Ga$-factor $F \colon M_1^{S_1} \to M_2^{S_2}$,
the push-forward measure $F_{\ast} \nu$ is also $\Ga$-invariant.
Such processes are called \emph{factors} of the i.i.d.\ process $\nu$.
\end{definition}
Sometimes we will think of an invariant process $\nu$ on $M^S$
as an $M^S$-valued random variable (whose distribution is $\nu$),
or as a collection of $M$-valued random variables (whose joint distribution is $\nu$).
For example, if $Z_s$, $s\in S_1$ are independent, $M_1$-valued random variables
with some common distribution $\mu$, then $Z = \left( Z_s \right)_{s \in S_1}$
is an i.i.d.\ process on $M_1^{S_1}$.
Given a $\Ga$-factor $F \colon M_1^{S_1} \to M_2^{S_2}$,
$X \defeq F(Z)$ is a factor of the i.i.d.\ process $Z$.
Then $X$ is a collection $\left( X_s \right)_{s \in S_2}$ of $M_2$-valued random variables.
They can be expressed using the corresponding $f$ as well: clearly $X_o = f(Z)$,
and it is also easy to see that $X_s = f( \ga \cdot Z )$,
where $\ga \in \Ga$ is an arbitrary group element taking $o$ to $s$.

\subsection{Factors on $T_d$}
\label{sec:factors_Td}
In this paper we mainly consider the case
when $\Ga$ is the automorphism group $\Aut(T_d)$ of
the $d$-regular infinite tree $T_d$ ($d \geq 3$)
and $S$ is either the vertex set $V(T_d)$ or the directed edge set $E(T_d)$.
The latter consists of the ordered pairs $(u,v)$,
where $u,v \in V(T_d)$ are neighbors.
For a directed edge $e=(u,v)$, the inverse of $e$
is the directed edge $e^{-1} = (v,u)$.

In this setting, when we say \emph{factor of i.i.d.\ process},
we do not need to specify which i.i.d.\ process we have in mind.
The reason for this is that i.i.d.\ processes are factors of each other in this case.
The most natural i.i.d.\ processes to consider would be:
\begin{align*}
\nu_1: \quad & S=V(T_d); M=[0,1]; \mu \mbox{ is the Lebesgue measure,}\\
\nu_2: \quad & S=V(T_d); M=\{0,1, \ldots, m-1\} \mbox{ for some } m\geq 2;
\mu \mbox{ is the uniform measure on $M$,}\\
\nu_3: \quad & S=E(T_d); M=[0,1]; \mu \mbox{ is the Lebesgue measure,}\\
\nu_4: \quad & S=E(T_d); M=\{0,1, \ldots, m-1\} \mbox{ for some } m\geq 2;
\mu \mbox{ is the uniform measure on $M$.}
\end{align*}
It is trivial that $\nu_2$ is a factor of $\nu_1$
and that $\nu_4$ is a factor of $\nu_3$.
Extending the classical example of Ornstein and Weiss \cite{ornstein_weiss_1987},
it was shown in \cite{karen_ball} that $\nu_1$ is an $\Aut(T_d)$-factor of $\nu_2$.
(In \cite{karen_ball} the author considers the case $m=2$
but the same argument works for arbitrary $m \geq 2$.)
Furthermore, it can be seen easily that $\nu_2$ is a factor of $\nu_4$.
(Let the label of a vertex $v \in V(T_d)$ be
the sum of the labels of the directed edges starting at $v$, modulo $m$.)
Finally, it is possible to obtain $\nu_3$ as a factor of $\nu_1$.
(We can think of the $[0,1]$-label of a vertex as $d+1$ independent $[0,1]$-labels.
For each vertex $v$, we order its neighbors
according to their $(d+1)$th labels.
The directed edge going from $v$ to its $i$th
largest neighbor will get the $i$th label of $v$.)
These constructions show that the above i.i.d.\ processes
are all factors of each other. We will usually work with $\nu_1$.

\emph{Block factors} are $\Aut(T_d)$-factors obtained using a rule $f$ that
depends only on some finite-radius ball around the distinguished vertex $o \in V(T_d)$.
If this radius is $0$, then we simply have $X_v = \varphi( Z_v ), \forall v \in V(T_d)$
for some measurable $\varphi \colon M_1 \to M_2$. If this is the case, 
then we say that $X$ is a \emph{pointwise factor} of $Z$. 
If $Z$ is an i.i.d.\ process, then so is any pointwise factor of $Z$. 
However, as the next example shows,
there exists a factor of i.i.d.\ process $X$ that is ``universal'' in the sense
that any factor of i.i.d.\ process can be obtained as a pointwise factor of $X$.
\begin{example} \label{ex}
Let $Z$ denote the i.i.d.\ process on $[0,1]^{V(T_d)}$ (of distibution $\nu_1$).
We claim that there exists a factor $X$ of $Z$
on $[0,1]^{V(T_d)}$ with the following properties.
\begin{enumerate}[(a)]
	\item With probability $1$ for any $u,v \in V(T_d)$ the values of $X_u, X_v$
	determine (in a measurable way) the values of $X$ along the whole path connecting $u$ and $v$.
	In other words, for any $V \subset V(T_d)$,
	$X_v, v \in V$ determine $X$ on the convex hull of $V$.
	\item Any factor $X'$ of $Z$ on some $M^{V(T_d)}$ can be obtained
	as the pointwise factor of $X$, that is,
	$X'_v = \varphi(X_v), \forall v \in V(T_d)$ for some measurable $\varphi \colon [0,1] \to M$.
\end{enumerate}
\end{example}
\begin{proof}
The idea is to encode the whole labelled tree in $X_v$.
To do this with an $\Aut(T_d)$-factor we need to do the encoding
in a way that it only contains the isomorphism type of the labelled tree rooted at $v$.
With probability $1$ the values $Z_v$ are pairwise distinct.
Then for a given configuration $\omega = \left( \omega_u \right)_{u \in V(T_d)}$
of pairwise disjoint labels we assign the following sequence to any given vertex $v$:
$$ (
\underbrace{\omega_v}_{\parbox{0.5in}{\tiny label of $v$}},
\underbrace{\omega_{v_1} < \ldots < \omega_{v_d}}_
{\parbox{1in}{\tiny labels of the\\neighbors of $v$\\in increasing order}},
\underbrace{\omega_{v_{1,1}} < \ldots < \omega_{v_{1,d-1}}}_
{\parbox{1in}{\tiny labels of the\\remaining $d-1$\\neighbors of $v_1$}},
\underbrace{\omega_{v_{2,1}} < \ldots < \omega_{v_{2,d-1}}}_
{\parbox{1in}{\tiny labels of the\\remaining $d-1$\\neighbors of $v_2$}},
\ldots ) .$$
Finally, we apply a fixed injective measurable $[0,1]^{\IN} \to [0,1]$ mapping
(whose inverse is also measurable) to any such sequence to get the new label $\al_v$ of $v$.

By knowing the labels $\al_u$ and $\al_v$ of the vertices $u$ and $v$,
we know the isomorphism type of the $\omega$-labelled tree,
and also the original labels $\omega_u, \omega_v$ of $u$ and $v$.
Then to find out the $\al$-label of any other vertex,
it suffices to know its original $\omega$-label.
Now let $n$ denote the distance of $u$ and $v$.
Then for any integer $0 < k < n$, the $k$-ball around $u$ and the $(n-k)$-ball around $v$
have exactly one common vertex (the $k$th vertex on the $u$-$v$ path).
Therefore there is one common value
among the $\omega$-labels of these two balls, the label of the common vertex.
Thus $\alpha_u$ and $\alpha_v$ indeed determine the $\omega$-label
and hence the $\alpha$-label of any vertex on the $u$-$v$ path, which proves (a).

To see (b), recall that any factor $X'$ of $Z$ comes from a rule $f \colon [0,1]^{V(T_d)} \to M$
that is measurable and invariant under the stabilizer of $o$.
Clearly, such an $f$ can be obtained
as a measurable function $\varphi$ of the $\alpha$-label at $o$.
\end{proof}
We claim that Theorem \ref{thm:main} is essentially sharp for any $V_1, V_2 \subset V(T_d)$.
Let $X$ be a process satisfying the properties (a) and (b) above,
and let us pick $v_1$ and $v_2$ in the convex hulls of $V_1$ and $V_2$
with the smallest possible distance: $\dist(v_1, v_2) = k$.
It follows from (a) that $X_{v_i}$ is a measurable function of $X_u, u \in V_i$,
and by (b) we can obtain $X'_{v_i}$  as a measurable function of $X_{v_i}$
for any factor $X'$ of $Z$. Thus we can choose measurable $h_i$ in a way
that $\corr( h_1(X), h_2(X) ) = \corr( X'_{v_1}, X'_{v_2} )$
for any given factor of i.i.d.\ process $X'$ on $\IR^{V(T_d)}$.
Therefore the fact that the vertex-correlation bound \eqref{eq:corr_decay_for_vertices}
is sharp means that \eqref{eq:corr_of_subgraphs}  is also essentially sharp.
(In \cite{cordec} the authors give a very simple example for which
the correlation of two vertices of distance $k$ is of order $1 / (\sqrt{d-1})^k$,
which already shows that \eqref{eq:corr_decay_for_vertices} is essentially sharp,
but it is clear from their proof of the bound that it actually has to be sharp.)

\subsection{Tail $\si$-algebras}

For $v \in V(T_d)$ let $\pi_v \colon M^{V(T_d)} \to M$
denote the natural coordinate projection.
For $V \subseteq V(T_d)$ let $\si(V)$ be
the $\si$-algebra generated by the maps $\pi_v$, $v \in V$.
\begin{definition}
The \emph{tail $\si$-algebra} is defined as $\bigcap_r \si( V(T_d) \setminus B_r )$,
where $B_r$ stands for the $r$-ball around some fixed vertex $o$. 
Clearly, the tail does not depend on the choice of $o$. 

A $\si$-algebra is said to be \emph{trivial} w.r.t.\ a probability measure
if it contains only sets of measure $0$ or $1$. 

We say that a process \emph{has full tail} if its tail is 
the whole $\si$-algebra $\si( V(T_d) )$. 
\end{definition}
It is open whether trivial tail implies factor of i.i.d. 
As for the reverse direction, it follows easily 
from the Kolmogorov $0$-$1$ Law that block factors have trivial tail. 
This is not true for arbitrary factors, though. 
In \cite[Proposition 2.4]{lyons} it was shown that 
``the uniform random perfect matching on $T_d$'' has full tail 
since knowing the matching outside a ball determines it inside as well. 
(It had been known earlier by Lyons and Nazarov \cite{lyons_nazarov}
that this process is factor of i.i.d.) 

In Example \ref{ex} we presented a factor of i.i.d.\ process on $[0,1]^{V(T_d)}$ for which
$\si( V )$ coincides with the $\si$ of the convex hull of $V$ for any $V \subset V(T_d)$.
Such a process clearly has a full tail. 
One could consider some other sequence of shrinking subsets of $V(T_d)$ with empty intersection: 
$V(T_d) \supset V_1 \supset V_2 \supset \ldots$ with $\bigcap_n V_n = \emptyset$, 
and define another notion of tail by considering the $\si$-algebra $\bigcap_n \si( V_n )$. 
The only case for which such a tail could be trivial for Example \ref{ex} 
is when the convex hulls of $V_n$ ``converge to infinity''. 
The only such tails are the 1-ended tails introduced in \cite{pemantle} 
under the name ``one-sided tails'', see also \cite[Section 2]{lyons}. 
\begin{definition}
The \emph{1-ended tail $\si$-algebra} corresponding
to an infinite simple path $(v_0, v_1, v_2, \ldots )$ is $\bigcap_n \si( D_n )$,
where $D_n$ is the set of vertices closer to $v_n$ than to $v_{n-1}$.

It is easy to see that for an $\Aut(T_d)$-invariant measure on $M^{V(T_d)}$
the 1-ended tails are all trivial or none are trivial.
\end{definition}
The group $\Aut^{+}(T_d)$ of ``parity-preserving'' automorphisms
is a subgroup of $\Aut(T_d)$ of index $2$. (The vertices of $T_d$ can be 
partitioned into two classes based on the parity of their distance to a fixed vertex, 
and an automorphism either takes each class into itself, 
i.e.\ \emph{preserves parity}, or swaps the two classes.) 
In \cite{pemantle} Pemantle showed that any $\Aut^{+}(T_d)$-invariant process that is ergodic 
has trivial $1$-ended tails, which in turn implies mixing, a weak law of large numbers, 
and a version of the Birkhoff Ergodic Theorem on $T_d$. 

Next we show how the 1-ended tail triviality 
(for the case of factor of i.i.d.\ processes) 
follows from Theorem \ref{thm:main}.
\begin{proof}[Proof of Corollary \ref{cor:1-ended}]
Let $A_1$ be an arbitrary event in the 1-ended tail
and $A_2 \in \si(V_2)$ for some finite set $V_2 \subset V(T_d)$.
We claim that $A_1$ and $A_2$ are independent.
To see this, it suffices to show that the indicator functions
$h_i \defeq \ind_{A_i}$, $i=1,2$ have zero correlation.
However, for any fixed $n$ we can apply Theorem \ref{thm:main}
for $V_1 = D_n$ since $h_1 = \ind_{A_1}$ can be considered
as a measurable $M^{D_n} \to \IR$ function for any $n$.
As $n \to \infty$ the distance of $D_n$ and the convex hull of $V_2$ gets arbitrarily large,
so we obtain that the correlation must be $0$.

Therefore $A_1$ is independent from the $\si$-algebra $\si(V_2)$
for any finite subset $V_2$ of the vertex set.
The Dynkin $\pi$-$\lambda$ lemma implies that $A_1$ is independent
from the whole $\si$-algebra $\si( V(T_d) )$. In particular,
$A_1$ is independent from itself, so its probability is either 0 or 1.
This holds for any event in the 1-ended tail meaning that it is trivial.
\end{proof}
\begin{remark} \label{rm:eq}
The following are equivalent for any $\Aut(T_d)$-invariant process $\mu$ on $M^{V(T_d)}$.
\begin{enumerate}[(a)]
\item The $1$-ended tails are trivial w.r.t.\ $\mu$.
\item Let $A$ denote an event on $M^{V(T_d)}$ depending only on the coordinates $U \subset V(T_d)$ 
(that is, $A = A' \times M^{V(T_d) \setminus U}$ for some measurable subset $A'$ of $M^U$). 
Furthermore, let the events $B_n$ depend on the coordinates $V_n$. 
Then $B_n$ is asymptotically independent from $A$ (i.e.\ $\mu(A \cap B_n) - \mu(A) \mu(B_n) \to 0$) 
whenever the distance of the convex hulls of $U$ and $V_n$ goes to infinity as $n \to \infty$. 
\end{enumerate}
The proof above essentially shows that (b) implies (a), 
while the proof of \cite[Corollary 7]{pemantle} yields the other implication. 
\end{remark}
%


\section{Proof of the main result} \label{sec:3}

Recall that $\Tt$ denotes the rooted $(d-1)$-ary tree:
the degree of each vertex of $\Tt$ is $d$ except for
one vertex (the root) of degree $d-1$ (that is, every vertex has $d-1$
``offsprings''). If we delete an edge of $T_d$,
both connected components will be isomorphic to $\Tt$.

\textbf{Step 1.}
Let $V_1$ and $V_2$ be subsets of $V(T_d)$ and
let $k$ denote the distance of their convex hulls as in Theorem \ref{thm:main}.
This means that there exist unique vertices $v_1, v_2$
such that $v_i$ is in the convex hull of $V_i$ and
there is a (unique) path of length $k$ connecting $v_1$ and $v_2$ (we assume $k \geq 1$).
If we delete the edges of this path,
then $V_i$ is contained in the connected component of $v_i$,
which is isomorphic to $\Tt$, $i=1,2$, see Figure \ref{fig1}. 
Let us replace $V_i$ with its component. This way $V_1$ and $V_2$
get larger while $k$ remains the same, therefore
it suffices to prove Theorem \ref{thm:main} in this case.
\begin{figure}[ht]
\centering
\def\svgwidth{0.5\columnwidth}
\begingroup%
  \makeatletter%
  \providecommand\color[2][]{%
    \errmessage{(Inkscape) Color is used for the text in Inkscape, but the package 'color.sty' is not loaded}%
    \renewcommand\color[2][]{}%
  }%
  \providecommand\transparent[1]{%
    \errmessage{(Inkscape) Transparency is used (non-zero) for the text in Inkscape, but the package 'transparent.sty' is not loaded}%
    \renewcommand\transparent[1]{}%
  }%
  \providecommand\rotatebox[2]{#2}%
  \ifx\svgwidth\undefined%
    \setlength{\unitlength}{547.999992bp}%
    \ifx\svgscale\undefined%
      \relax%
    \else%
      \setlength{\unitlength}{\unitlength * \real{\svgscale}}%
    \fi%
  \else%
    \setlength{\unitlength}{\svgwidth}%
  \fi%
  \global\let\svgwidth\undefined%
  \global\let\svgscale\undefined%
  \makeatother%
  \begin{picture}(1,0.59754696)%
    \put(0,0){\includegraphics[width=\unitlength,page=1]{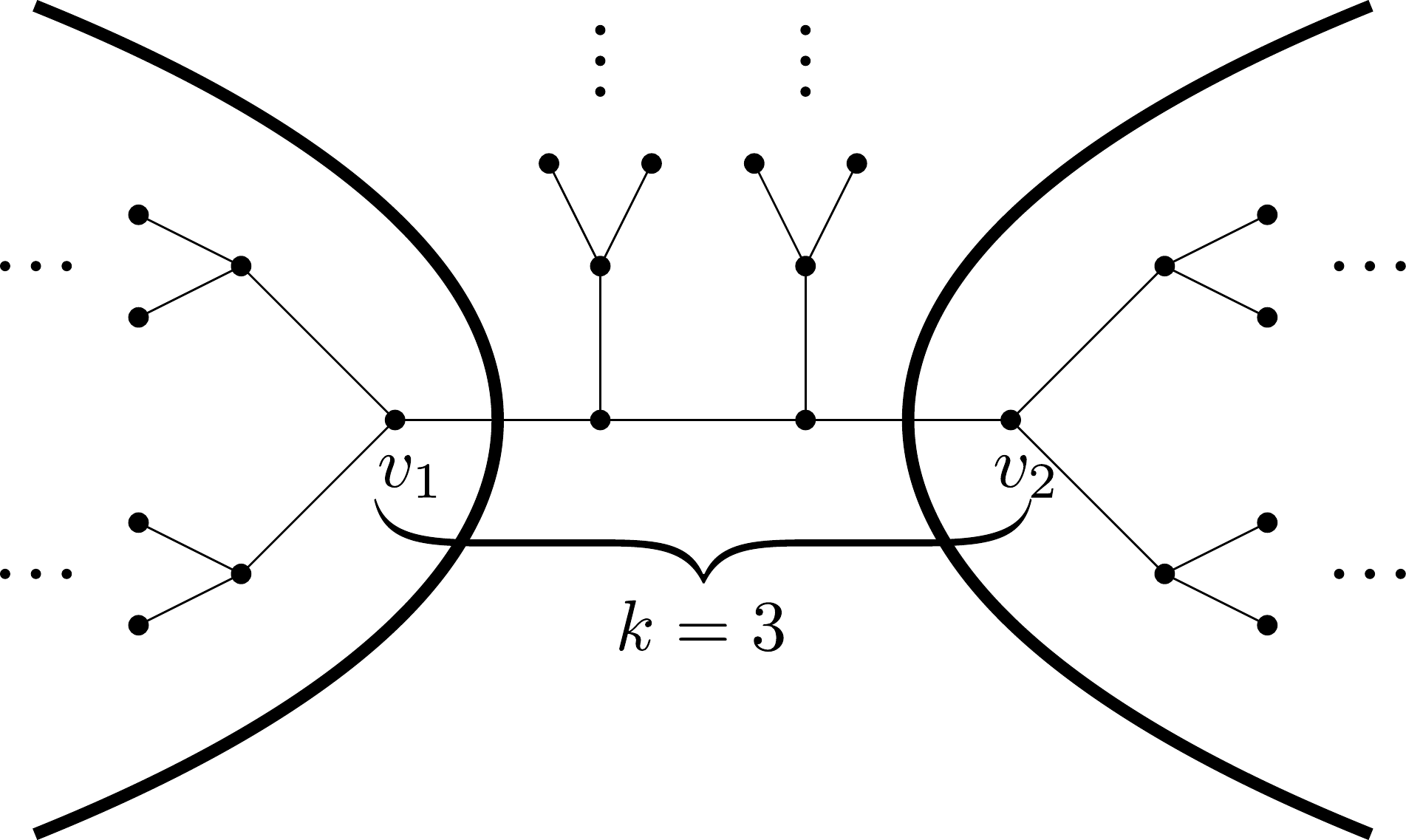}}%
  \end{picture}%
\endgroup%
\caption{Step 1: we might assume that $V_1$ and $V_2$ are $(d-1)$-ary subtrees of distance $k$}
\label{fig1}
\end{figure}

\textbf{Step 2.}
So from this point on we will assume that $V_1$ and $V_2$ are
disjoint copies of $\Tt$ in $T_d$, with their roots at distance $k$.
This means that $\left( X_v \right)_{v \in V_1}$ and $\left( X_v \right)_{v \in V_2}$ 
can be viewed as processes on $V(\Tt)$. 
More precisely, we define $M^{V(\Tt)}$-valued random variables $X_1, X_2$ 
by fixing graph isomorphisms $\Phi_i \colon V(\Tt) \to V_i$ for $i=1,2$, 
and setting $(X_i)_u \defeq X_{\Phi_i(u)}$ for $i=1,2$, $u \in V(\Tt)$. 
Using the $\Aut(T_d)$-invariance of $X$
the following properties of $X_1$ and $X_2$ follow easily.
\begin{claim}
Both $X_1$ and $X_2$ are invariant under $\Gat \defeq \Aut(\Tt)$.
In fact, $( \ga_1 \cdot X_1, \ga_2 \cdot X_2 )$ and $(X_1,X_2)$
have the same joint distribution for any $\ga_1, \ga_2 \in \Gat$.
Furthermore, $(X_1,X_2)$ and $(X_2,X_1)$ also have the same joint distribution.
\end{claim}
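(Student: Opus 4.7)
The plan is to deduce each of the three distributional statements in the Claim from the $\Aut(T_d)$-invariance of the full process $X$, by exhibiting in each case an honest automorphism of $T_d$ that realizes the desired symmetry of $(X_1, X_2)$. The key observation is that $V_i$ is connected to $V(T_d) \sm V_i$ through only one edge, the edge from $v_i$ to the next vertex of the $v_1 v_2$-path. Because any root-fixing automorphism of $\Tt$ fixes $v_i$ once transported via $\Phi_i$, it extends by the identity on the complement to a genuine element of $\Aut(T_d)$.

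I would use this to prove the first assertion ($\Gat$-invariance of each $X_i$) as follows: for $\ga \in \Gat$ let $\hat{\ga}_i \in \Aut(T_d)$ be the extension of $\Phi_i \circ \ga \circ \Phi_i^{-1}$ by identity on $V(T_d) \sm V_i$. Then $\hat{\ga}_i \cdot X \overset{d}{=} X$, and restricting to $V_i$ and pulling back via $\Phi_i$ yields $\ga \cdot X_i \overset{d}{=} X_i$.

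For the joint statement, the extensions $\hat{\ga}_1$ and $\hat{\ga}_2$ act non-trivially on the disjoint sets $V_1$ and $V_2$, hence commute, and their composition is an element of $\Aut(T_d)$ acting as $\Phi_i \ga_i \Phi_i^{-1}$ on $V_i$ for $i=1,2$ and as the identity elsewhere. A single application of $\Aut(T_d)$-invariance to this composite gives $(\ga_1 \cdot X_1, \ga_2 \cdot X_2) \overset{d}{=} (X_1, X_2)$.

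For the swap, I would construct an involution $\si \in \Aut(T_d)$ that reverses the $v_1 v_2$-path about its midpoint (a vertex or an edge depending on the parity of $k$) and hence interchanges $V_1$ with $V_2$. Existence of $\si$ follows from the standard homogeneity of $T_d$, namely that any isomorphism between finite rooted subtrees extends to a global automorphism. Now $\si|_{V_1}$ and $\Phi_2 \circ \Phi_1^{-1}$ both map $V_1 \to V_2$ but differ by some $\psi_1 \in \Gat$, and symmetrically on the other side by some $\psi_2 \in \Gat$. Applying invariance of $X$ under $\si$ then yields $(X_1, X_2) \overset{d}{=} (\psi_1^{-1} \cdot X_2, \psi_2^{-1} \cdot X_1)$, and the joint invariance just established absorbs the $\psi_i$'s, giving $(X_1, X_2) \overset{d}{=} (X_2, X_1)$. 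The only genuine technical point, more a routine check than an obstacle, is the verification that the gluing is legal: since the unique cross edge between $V_i$ and its complement is incident to the fixed root $v_i$, extension by identity automatically gives a graph automorphism of $T_d$.
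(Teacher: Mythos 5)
Your proof is correct. The paper does not actually supply an argument here; it states only that these properties ``follow easily'' from the $\Aut(T_d)$-invariance of $X$, and your construction spells out exactly the intended mechanism: extend a root-fixing automorphism of the subtree $V_i$ (root-fixing is automatic, since every element of $\Aut(\Tt)$ fixes the unique degree-$(d-1)$ vertex) by the identity across the one cut edge, compose two commuting such extensions for the joint statement, and use a path-reversing global automorphism swapping $V_1$ with $V_2$, with the resulting $\psi_1,\psi_2 \in \Gat$ discrepancies absorbed by the already-proved joint invariance. One small remark: you do not need $\si$ to be an involution; any $\si \in \Aut(T_d)$ with $\si(V_1)=V_2$ and $\si(V_2)=V_1$ suffices, and such $\si$ exist by the standard extension property of $T_d$ you cite.
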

Now let $h_1$, $h_2$ be as in Theorem \ref{thm:main}.
Then there clearly exist measurable functions $f_1, f_2 \colon M^{V(\Tt)} \to \IR$
such that $f_i(X_i) = h_i(X)$ ($i=1,2$). According to the following lemma
we might assume that $f_1$ and $f_2$ are actually the same function.
\begin{lemma} \label{lem:2functions}
Let $(A, \mathcal{F} )$ be an arbitrary measurable space.
Suppose that the $(A, \mathcal{F} )$-valued random variables $X_1,X_2$ are exchangeable
(that is, $(X_1,X_2)$ and $(X_2,X_1)$ have the same joint distribution),
and that there exists a constant $\al \geq 0$ with the property
that for any measurable $f \colon A \to \IR$ we have
\begin{equation} \label{eq:1function}
\left| \corr\big( f(X_1), f(X_2) \big) \right| \leq \al
\mbox{ provided that $f(X_1)$ has finite variance.}
\end{equation}
Then for any measurable functions $f_1,f_2 \colon A \to \IR$
\begin{equation} \label{eq:2functions}
\left| \corr\big( f_1(X_1), f_2(X_2) \big) \right| \leq \al
\mbox{ provided that $f_1(X_1)$ and $f_2(X_2)$ have finite variances.}
\end{equation}
\end{lemma}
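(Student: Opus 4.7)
The plan is to reinterpret the hypothesis as a numerical-radius bound on a symmetric bilinear form, and then extract the full off-diagonal bound via polarization.

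First I would center: replacing $f_i$ with $f_i - \E f_i(X_1)$ changes neither variances nor covariances, so we may assume $\E f_i(X_1) = 0$. Write $\mu$ for the common law of $X_1$ and $X_2$, and let $H$ denote the space of centred functions in $L^2(A, \mu)$ with norm $\|f\|^2 \defeq \E [f(X_1)^2]$. Define the bilinear form
\[ B(f, g) \defeq \E[f(X_1) g(X_2)] = \cov\bigl(f(X_1), g(X_2)\bigr), \qquad f, g \in H. \]
Cauchy--Schwarz in $L^2$ of the joint law of $(X_1, X_2)$ gives $|B(f, g)| \leq \|f\|\,\|g\|$, so $B$ is bounded, and the exchangeability $(X_1, X_2) \stackrel{d}{=} (X_2, X_1)$ yields $B(f, g) = B(g, f)$, so $B$ is symmetric. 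In this language the hypothesis \eqref{eq:1function} reads exactly $|B(f, f)| \leq \al \|f\|^2$ for every $f \in H$.

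To pass from the diagonal to the general bilinear estimate, I would use the polarization identity
\[ 4 B(f_1, f_2) = B(f_1 + f_2, f_1 + f_2) - B(f_1 - f_2, f_1 - f_2). \]
Applying the diagonal bound to each quadratic term and then the parallelogram law gives
\[ 4 |B(f_1, f_2)| \leq \al \bigl( \|f_1 + f_2\|^2 + \|f_1 - f_2\|^2 \bigr) = 2\al \bigl( \|f_1\|^2 + \|f_2\|^2 \bigr). \]
Replacing $(f_1, f_2)$ with $(t f_1, f_2/t)$ and optimizing over $t > 0$ (the usual AM--GM trick for homogenising such estimates) sharpens this to $|B(f_1, f_2)| \leq \al \|f_1\|\,\|f_2\|$, which rewrites as $|\cov(f_1(X_1), f_2(X_2))| \leq \al \sqrt{\var f_1(X_1) \cdot \var f_2(X_2)}$ — exactly \eqref{eq:2functions}.

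No serious obstacle is expected: the content is the classical observation that a bounded symmetric bilinear form on a Hilbert space is controlled by its numerical radius. The only things worth checking are that the hypothesis is actually applicable to $f_1 \pm f_2$ (their finite variance follows from the triangle inequality in $L^2$) and that exchangeability really does force the symmetry $B(f, g) = B(g, f)$ — this uses the joint distributional identity $(X_1, X_2) \stackrel{d}{=} (X_2, X_1)$, not merely the marginal symmetry $X_1 \stackrel{d}{=} X_2$.
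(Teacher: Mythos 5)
Your proof is correct and uses essentially the same argument as the paper: exchangeability gives symmetry of the cross-covariance, polarization plus the parallelogram law reduces the off-diagonal bound to the diagonal hypothesis. The only cosmetic difference is that the paper normalizes $\var f_i(X_i)=1$ up front, whereas you center and recover the homogeneous bound by the scaling-optimization step at the end.
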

\begin{proof}
We might assume that $\var( f_1(X_1) ) = \var( f_2(X_2) ) = 1$.
(If one of the variances is $0$, then
the correlation is $0$ by definition
and the statement of the lemma holds trivially.
Otherwise we can rescale $f_1$ and $f_2$ to make the variances equal to $1$
without changing the correlation.)

Since $X_1$ and $X_2$ are exchangeable we have
$ \cov( f_1(X_1), f_2(X_2) ) = \cov( f_1(X_2), f_2(X_1) )$.
It follows that
\begin{multline*}
\corr\big( f_1(X_1), f_2(X_2) \big) = \cov\big( f_1(X_1), f_2(X_2) \big) \\
= \frac{1}{4} \Big( \cov\big( (f_1+f_2)(X_1) , (f_1+f_2)(X_2) \big)
- \cov\big( (f_1-f_2)(X_1) , (f_1-f_2)(X_2) \big) \Big) .
\end{multline*}
Using the triangle inequality and applying \eqref{eq:1function}
to the function $f= f_1+f_2$ and to $f=f_1-f_2$ we obtain that
\begin{multline*}
\left| \corr\big( f_1(X_1), f_2(X_2) \big) \right| \leq \frac{\al}{4} \Big(
\var\big( (f_1+f_2)(X_1) \big) + \var\big( (f_1-f_2)(X_1) \big) \Big) \\
= \frac{\al}{4} \Big( 2\var\big( f_1(X_1) \big) + 2\var\big( f_2(X_1) \big) \Big) = \al .
\end{multline*}
\end{proof}
\textbf{Step 3.}
It remains to bound $\corr( f(X_1), f(X_2) )$ 
for any given measurable function $f \colon M^{V(\Tt)} \to \IR$. 
We claim that it suffices to do this
in the case when $f$ is $\Gat = \Aut(\Tt)$-invariant.
The idea is to ``average $f$ over the orbits of the $\Gat$-action'',
that is, take the function
$\bar{f}( \omega ) = \int_{\Gat} f(\ga \cdot \omega) \, \mathrm{d} \gamma$ instead.

To make this more precise,
let us consider the natural topology and the \emph{Haar measure} on $\Gat$.
(For $u,v \in V(\Tt)$ let $\Gat_{u,v}$ denote the set of
those $\ga \in \Gat$ that take $u$ to $v$.
Then the sets $\Gat_{u,v}$ form a base of the topology.
Furthermore, $\Gat_{u,v}$ is non-empty if and only if
$u$ and $v$ have the same distance $n$ from the root of $\Tt$,
in which case the measure of $\Gat_{u,v}$ is $1/(d-1)^n$.)
By $\mu$ we will denote the common distribition of $X_1$ and $X_2$,
so $\mu$ is a probability measure on $M^{V(\Tt)}$.
It can be seen easily that the following
$\Gat \times M^{V(\Tt)} \to \IR$ function is measurable:
$$ \varphi(\ga, \omega) \defeq f( \ga \cdot \omega ) \quad \ga \in \Gat, \omega \in M^{V(\Tt)}, $$
and that $|\varphi|$ has finite integral. Therefore
$$ \bar{f}( \omega ) \defeq \int_{\Gat} \varphi(\ga, \omega) \, \mathrm{d} \gamma =
\int_{\Gat} f(\ga \cdot \omega) \, \mathrm{d} \gamma $$
is defined for $\mu$-a.e.\ $\omega \in M^{V(\Tt)}$ and it is measurable.
Furthermore, $\bar{f} \colon M^{V(\Tt)} \to \IR$ is clearly $\Gat$-invariant.
(To be more precise, there exists a measurable, $\Gat$-invariant
$M^{V(\Tt)} \to \IR$ function that is equal to $\bar{f}$ $\mu$-a.e.)

Although we will not need this, we mention that there is another way to define $\bar{f}$:
take the $\si$-algebra of $\Gat$-invariant measurable sets in $M^{V(\Tt)}$
and let $\bar{f}$ be the conditional expectation of $f$ w.r.t.\ this $\si$-algebra.
\begin{proposition}
The function $\bar{f}$ has the following properties.
\begin{enumerate}[(a)]
	\item $\E \bar{f}(X_i) = \E f(X_i)$,
	\item $\E (\bar{f})^2(X_i) \leq \E f^2(X_i)$,
	\item $\E \bar{f}(X_1)\bar{f}(X_2) = \E f(X_1)f(X_2)$. 	
\end{enumerate}
\end{proposition}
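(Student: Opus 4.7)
The proof plan is to unpack the definition of $\bar{f}$ and apply Fubini's theorem, reducing everything to the invariance properties of $X_1$, $X_2$ stated in the claim just before Lemma \ref{lem:2functions}. Throughout, a key fact is that $\Gat = \Aut(\Tt)$ is compact (since every automorphism must fix the unique degree-$(d-1)$ vertex, i.e.\ the root), so the Haar measure on $\Gat$ can be normalized to a probability measure, and no integrability issue arises beyond those needed to apply Fubini's theorem.

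For part (a), I would compute
\begin{equation*}
\E \bar{f}(X_i) = \E \int_{\Gat} f(\ga \cdot X_i) \, \mathrm{d}\ga = \int_{\Gat} \E f(\ga \cdot X_i) \, \mathrm{d}\ga.
\end{equation*}
By the claim, $\ga \cdot X_i$ has the same distribution as $X_i$ for every $\ga \in \Gat$, so the inner expectation equals $\E f(X_i)$, and integrating against the probability Haar measure gives $\E f(X_i)$. For part (c) the argument is essentially identical but uses the joint invariance: writing
\begin{equation*}
\E \bar{f}(X_1) \bar{f}(X_2) = \int_{\Gat} \int_{\Gat} \E \bigl[ f(\ga_1 \cdot X_1) f(\ga_2 \cdot X_2) \bigr] \, \mathrm{d}\ga_1 \, \mathrm{d}\ga_2
\end{equation*}
by Fubini, and noting that $(\ga_1 \cdot X_1, \ga_2 \cdot X_2)$ has the same joint distribution as $(X_1, X_2)$, the integrand is constantly $\E f(X_1) f(X_2)$, and the double integral against the product probability measure returns this value.

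For part (b), the inequality comes from Jensen's inequality applied to the convex function $t \mapsto t^2$: pointwise almost everywhere in $\omega$,
\begin{equation*}
\bar{f}(\omega)^2 = \left( \int_{\Gat} f(\ga \cdot \omega) \, \mathrm{d}\ga \right)^2 \leq \int_{\Gat} f(\ga \cdot \omega)^2 \, \mathrm{d}\ga.
\end{equation*}
Taking expectation in $\omega = X_i$ and applying Fubini plus the marginal invariance used in part (a) (now for $f^2$ in place of $f$) yields $\E \bar{f}(X_i)^2 \leq \E f^2(X_i)$.

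The only mildly delicate point is justifying the use of Fubini in parts (a) and (c): one needs integrability of $(\ga, \omega) \mapsto f(\ga \cdot \omega)$ against the product of Haar measure and $\mu$, and analogously in (c) for the product of two Haar measures and the joint law of $(X_1, X_2)$. By Lemma \ref{lem:2functions} we may assume $f \in L^2(\mu)$, so part (b) actually provides the required absolute integrability (after applying it to $|f|$), making the interchanges of order legitimate; this is the only step where any care is required, and it is routine once the compactness of $\Gat$ has been noted.
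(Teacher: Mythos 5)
Your proof is correct and follows essentially the same route as the paper: Fubini together with the (marginal or joint) $\Gat$-invariance of $X_1, X_2$ for parts (a) and (c), and a pointwise convexity bound before integrating for part (b). The only cosmetic difference is that you invoke Jensen's inequality where the paper cites Cauchy--Schwarz, but since the Haar measure on $\Gat$ is a probability measure these are the same estimate.
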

\begin{proof}
If $\omega$ is $\mu$-random element of $M^{V(\Tt)}$,
then the distribution of $\ga \cdot \omega$ is also $\mu$ for any fixed $\ga \in \Gat$.
Using this fact and Fubini's theorem (a) easily follows:
$$ \E \bar{f}(X_i) = \int \bar{f} \, \mathrm{d} \mu =
\int \int_{\Gat} f(\ga \cdot \omega) \, \mathrm{d} \gamma \mathrm{d} \mu =
\int_{\Gat} \underbrace{\int f(\ga \cdot \omega) \, \mathrm{d} \mu}_{\E f(X_i)} \, \mathrm{d} \gamma = \E f(X_i) .$$
To see (b) we need to first use the Cauchy-Schwarz inequality
before applying Fubini's theorem to $\varphi^2$:
\begin{multline*}
\E (\bar{f})^2(X_i) = \int (\bar{f})^2 \, \mathrm{d} \mu =
\int \left( \int_{\Gat} f(\ga \cdot \omega) \, \mathrm{d} \gamma \right)^2 \mathrm{d} \mu \leq
\int \int_{\Gat} f^2(\ga \cdot \omega) \, \mathrm{d} \gamma \mathrm{d} \mu \\
 = \int_{\Gat} \underbrace{\int f^2(\ga \cdot \omega) \, \mathrm{d} \mu}_{\E f^2(X_i)}
 \, \mathrm{d} \gamma = \E f^2(X_i) .
\end{multline*}
Finally, to prove (c) we use that
$(\ga_1 \cdot X_1, \ga_2 \cdot X_2)$ and $(X_1,X_2)$ have the same joint distribution
for any fixed $\ga_1,\ga_2 \in \Gat$. It follows that
$$ \E f(\ga_1 \cdot X_1)f(\ga_2 \cdot X_2) = \E f(X_1)f(X_2) .$$
Integrating this equality w.r.t.\ $\mathrm{d} \gamma_1 \mathrm{d} \gamma_2$
and using Fubini's theorem once again
we conclude that $\E \bar{f}(X_1)\bar{f}(X_2) = \E f(X_1)f(X_2)$.
(Each time we used it, the conditions of Fubini's theorem were satisfied
as $|\varphi|$ and $\varphi^2$ are measurable and have finite integrals.)
\end{proof}
It follows from (a) and (c) that $\bar{f}(X_1)$ and $\bar{f}(X_2)$
have the same covariance as $f(X_1)$ and $f(X_2)$,
while (a) and (b) imply that $\var \bar{f}(X_i) \leq \var f(X_i)$.
Consequently,
$$ \left| \corr \big( f(X_1), f(X_2) \big) \right| \leq
\left| \corr \big( \bar{f}(X_1), \bar{f}(X_2) \big) \right| .$$
Therefore it suffices to bound the correlation for $\bar{f}$,
that is, we might assume that $f$ was $\Gat$-invariant in the first place.

\textbf{Step 4.}
Whenever we have a factor of i.i.d.\ process $X_v$ on $M^{V(T_d)}$
and a measurable $\Gat$-invariant function $f \colon M^{V(\Tt)} \to \IR$,
we can combine them to create a factor of i.i.d.\ process $Y_e$ on $\IR^{E(T_d)}$.
To get $Y_e$ for a directed edge $e = (u,w)$ we take the subtree $T_e$ ``behind'' $e$
(that is, $V(T_e)$ consists of those vertices of $T_d$ that are closer to $u$ than to $w$),
and apply $f$ to $\left( X_v \right)_{v \in V(T_e)}$, see Figure \ref{fig2}. 
We can do this because $T_e$ is isomorphic to $\Tt$,
and $Y_e$ will be well defined since $f$ is $\Gat$-invariant.
It is also easy to see that $\left( Y_e \right)_{e \in E(T_d)} $
will be a factor of i.i.d.\ process.

Furthermore, $Y_{e_i} = f(X_i)$, $i=1,2$, where
$e_1$ is the directed edge starting at $v_1$ and ``pointing towards'' $v_2$, and
$e_2$ is the directed edge starting at $v_2$ and ``pointing towards'' $v_1$.
So it remains to show that the correlation of $Y_{e_1}$ and $Y_{e_2}$ 
is small if the distance of $e_1$ and $e_2$ is large. 
This final step will be done in the next section, 
see Theorem \ref{thm:edge_corr} below. 
The bound \eqref{eq:edge_corr} clearly implies Theorem \ref{thm:main}. 
(Note that $\dist(e_1,e_2) = k-1$ in our case.) 
\begin{figure}[ht]
\centering
\def\svgwidth{0.5\columnwidth}
\begingroup%
  \makeatletter%
  \providecommand\color[2][]{%
    \errmessage{(Inkscape) Color is used for the text in Inkscape, but the package 'color.sty' is not loaded}%
    \renewcommand\color[2][]{}%
  }%
  \providecommand\transparent[1]{%
    \errmessage{(Inkscape) Transparency is used (non-zero) for the text in Inkscape, but the package 'transparent.sty' is not loaded}%
    \renewcommand\transparent[1]{}%
  }%
  \providecommand\rotatebox[2]{#2}%
  \ifx\svgwidth\undefined%
    \setlength{\unitlength}{397.999992bp}%
    \ifx\svgscale\undefined%
      \relax%
    \else%
      \setlength{\unitlength}{\unitlength * \real{\svgscale}}%
    \fi%
  \else%
    \setlength{\unitlength}{\svgwidth}%
  \fi%
  \global\let\svgwidth\undefined%
  \global\let\svgscale\undefined%
  \makeatother%
  \begin{picture}(1,0.82275311)%
    \put(0,0){\includegraphics[width=\unitlength,page=1]{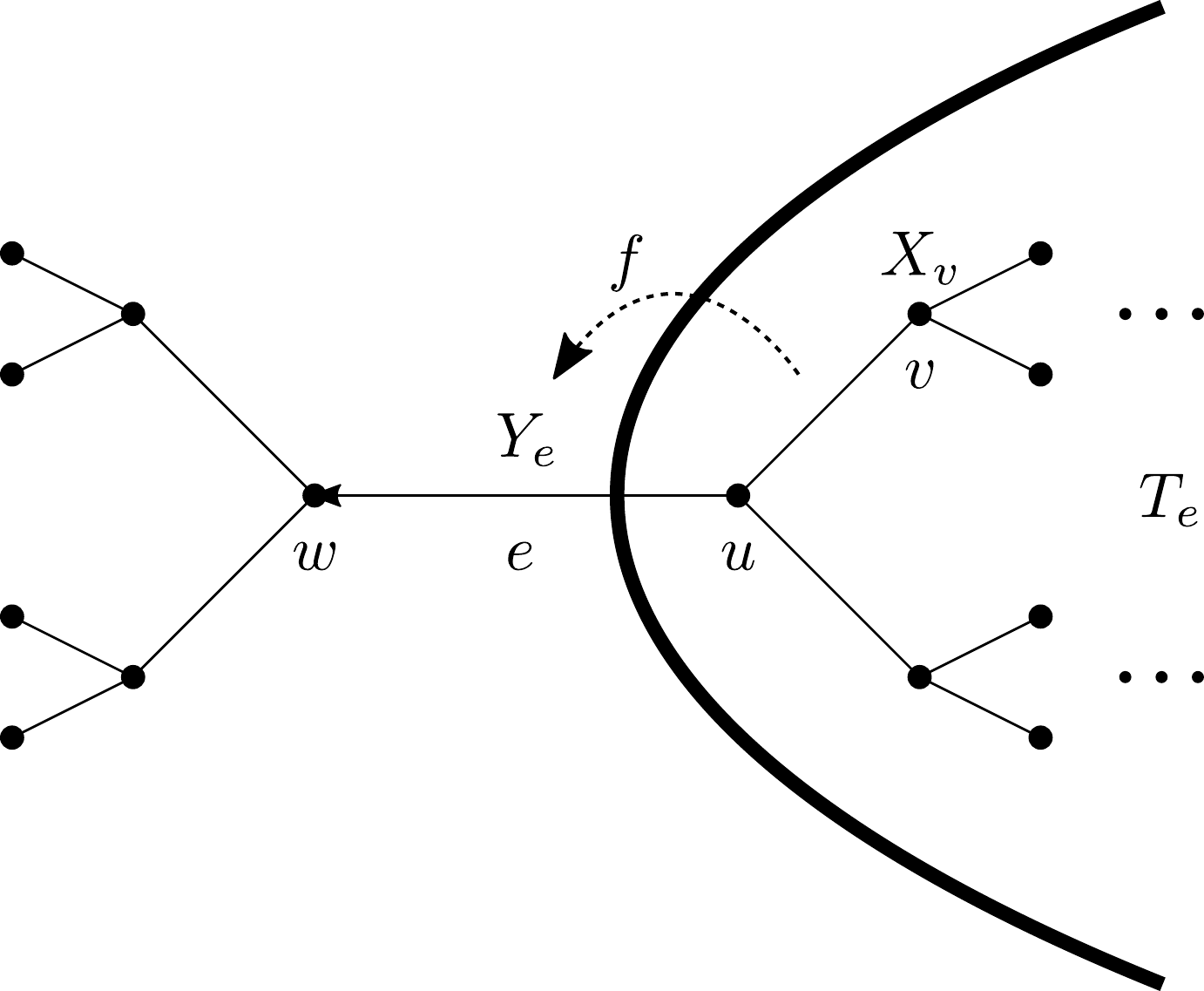}}%
  \end{picture}%
\endgroup%
\caption{Step 4: obtaining $Y_e$ from $\left( X_v \right)_{v \in V(T_e)}$}
\label{fig2}
\end{figure}
%


\section{Correlation decay for directed edges} \label{sec:4}

In \cite{cordec} Backhausz, Szegedy and Vir\'ag bounded the correlation
of a pair of vertices for factor of i.i.d.\ processes on $\IR^{V(T_d)}$,
see \eqref{eq:corr_decay_for_vertices}.
The goal of this section is to prove a similar bound
but for directed edges instead of vertices, that is,
for factor of i.i.d.\ processes on $\IR^{E(T_d)}$.

By the distance of two undirected edges $e_1,e_2$ we mean the smallest integer $k$
for which there exists a path containing $k+1$ edges including $e_1$ and $e_2$, see Figure \ref{fig3}. 
As for the distance of directed edges, we simply forget the directions of the edges
and take the distance of the corresponding undirected edges. Equivalently,
$$ \dist\left( (u_1,u_2) ; (v_1,v_2) \right) \defeq 
\begin{cases}
0 \text{, if } u_1=v_1; u_2=v_2 \text{ or } u_1=v_2; u_2=v_1,\\
1 + \min_{i,j \in \{1,2\} } \dist( u_i, v_j ) \text{, otherwise.}
\end{cases}
$$
\begin{figure}[ht]
\centering
\def\svgwidth{0.35\columnwidth}
\begingroup%
  \makeatletter%
  \providecommand\color[2][]{%
    \errmessage{(Inkscape) Color is used for the text in Inkscape, but the package 'color.sty' is not loaded}%
    \renewcommand\color[2][]{}%
  }%
  \providecommand\transparent[1]{%
    \errmessage{(Inkscape) Transparency is used (non-zero) for the text in Inkscape, but the package 'transparent.sty' is not loaded}%
    \renewcommand\transparent[1]{}%
  }%
  \providecommand\rotatebox[2]{#2}%
  \ifx\svgwidth\undefined%
    \setlength{\unitlength}{353.67212069bp}%
    \ifx\svgscale\undefined%
      \relax%
    \else%
      \setlength{\unitlength}{\unitlength * \real{\svgscale}}%
    \fi%
  \else%
    \setlength{\unitlength}{\svgwidth}%
  \fi%
  \global\let\svgwidth\undefined%
  \global\let\svgscale\undefined%
  \makeatother%
  \begin{picture}(1,0.16382758)%
    \put(0,0){\includegraphics[width=\unitlength,page=1]{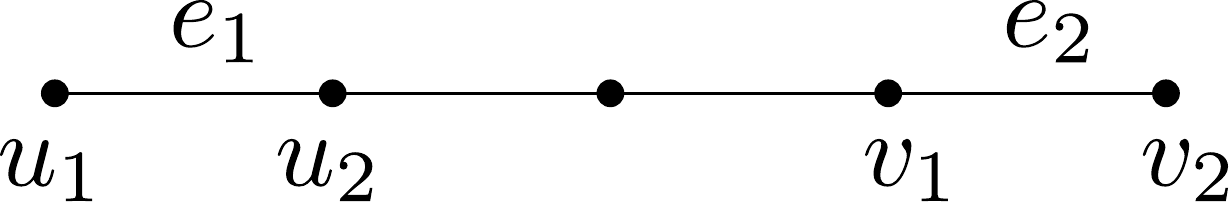}}%
  \end{picture}%
\endgroup%
\caption{The distance of the edges $e_1$ and $e_2$ in the figure is $3$}
\label{fig3}
\end{figure}
\begin{theorem} \label{thm:edge_corr}
Let $Y = \left( Y_{e} \right)_{e \in E(T_d)}$ be a
factor of i.i.d.\ process on $\IR^{E(T_d)}$. Then
\begin{equation} \label{eq:edge_corr}
 \left| \corr( Y_{e_1}, Y_{e_2} ) \right| \leq
(k+1) \left( \frac{1}{ \sqrt{d-1} } \right)^{k-1}
\mbox{, where } k = \dist(e_1,e_2),
\end{equation}
provided that $\var Y_e < \infty$.
\end{theorem}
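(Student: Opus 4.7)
The plan is to follow the spectral strategy of \cite{cordec}, but with the non-backtracking operator $B$ on $\ell^2(E(T_d))$ replacing the adjacency operator on $\ell^2(V(T_d))$. After normalizing to $\E Y_e = 0$ and $\var Y_e = 1$, I would use Section~\ref{sec:factors_Td} to trade the i.i.d.\ source for a Gaussian one, and approximate the rule in $L^2$ by a block factor of finite radius.

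With Gaussian labels, the Wiener--It\^{o} chaos decomposition writes $Y_e = \sum_{n \geq 1} Y_e^{(n)}$ with orthogonal components, so
$$\cov( Y_{e_1}, Y_{e_2} ) = \sum_{n \geq 1} \cov\!\left( Y_{e_1}^{(n)}, Y_{e_2}^{(n)} \right).$$
For each $n$, expand $Y_e^{(n)} = \sum_S \alpha_S(e) H_S(Z)$ over $n$-multisets $S$ of vertices, the coefficient $\alpha_S(e)$ being invariant under the stabilizer of $e$. The $n$-th summand equals $\sum_S \alpha_S(e_1)\alpha_S(e_2)$ and is thus the $(e_1,e_2)$-entry of a positive semidefinite $\Aut(T_d)$-equivariant operator $K_n$ on $\ell^2(E(T_d))$, whose diagonal is bounded by $\var Y_e^{(n)}$, with $\sum_n \var Y_e^{(n)} = 1$. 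Any such equivariant operator lies in the commutant of the regular representation, which on $T_d$ is generated by $B$ and its adjoint; consequently the entry of $K_n$ at edge-distance $k$ is controlled by matrix coefficients of powers of $B$.

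The core analytic task, carried out in Section~\ref{sec:4.1}, is thus to bound the operator norm of $B^k$ on the appropriate $\Aut(T_d)$-invariant subspace of $\ell^2(E(T_d))$. The target estimate is of the form $\|B^k\| \leq C(k+1)\,(d-1)^{(k-1)/2}$, reflecting the spectral radius $\sqrt{d-1}$ of $B$ on the tree together with a linear-in-$k$ Jordan-type prefactor. Combined with the diagonal normalization via Cauchy--Schwarz, this yields the announced bound $(k+1)(\sqrt{d-1})^{-(k-1)}$.

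I expect the main obstacle to be precisely this polynomial inflation: $B$ is not normal, so one cannot invoke a clean spectral theorem, and the $(k+1)$ factor must be extracted by direct computation with the three-term (Chebyshev-type) recursion governing the shell projections around a fixed directed edge, or equivalently via radial generating functions. A secondary check is that the contributions from chaos degrees $n \geq 2$ decay fast enough in $k$ (geometrically, like $(d-1)^{-nk/2}$) to sum without spoiling the overall bound.
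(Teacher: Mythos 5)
Your overall strategy is the right one and broadly parallels the paper: reduce the edge-correlation bound to an operator-norm estimate for powers of the non-backtracking operator $B$. The paper, instead of passing to Gaussian labels and Wiener--It\^o chaos, works directly with the i.i.d.\ source by decomposing the Koopman representation into quasi-regular representations (Section on the decomposition of the Koopman representation); it then builds a ``generalized non-backtracking operator'' $\IB$ on $L^2(M^S,\nu)$ whose restriction $\IB_0$ to the mean-zero subspace is unitarily equivalent to restrictions of $B$ on $\ell^2(E(T_d))$. Your chaos route is a legitimate alternative and in that sense the two proofs match.

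However there is a genuine gap in the middle of your plan. The paper's crucial identity, obtained from $\IB^k f = \sum_{\bar e \to_k e}\ga_e\cdot f$ and equivariance of $Y$, is
$$\langle f,\IB^k f\rangle = \sum_{\bar e \to_k e}\E Y_{\bar e}Y_e = (d-1)^k\,\cov(Y_{e_1},Y_{e_2}),$$
so that the covariance is the \emph{normalized average} $\frac{1}{(d-1)^k}\langle f,\IB^k f\rangle$ over the $(d-1)^k$ non-backtracking walks of length $k$ out of $\bar e$. This $(d-1)^k$ division is precisely what turns a norm bound $\|B^k\|\le (k+1)(\sqrt{d-1})^{k+1}$ (Theorem~\ref{thm:Bnorm}) into the stated correlation bound $(k+1)(\sqrt{d-1})^{-(k-1)}$. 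Your write-up does not isolate this averaging step; you say only that the entry of $K_n$ at edge-distance $k$ ``is controlled by matrix coefficients of powers of $B$.'' As stated, this is too vague to yield the constant, and moreover your target estimate $\|B^k\|\le C(k+1)(d-1)^{(k-1)/2}$ is not achievable with any universal $C$: testing $B^k$ on $\delta_{e_0}$ gives $\|B^k\delta_{e_0}\|^2 = (d-1)^k$, so $\|B^k\|\ge(\sqrt{d-1})^k$, which already exceeds your bound. It looks as if you wrote the \emph{final} correlation exponent $(k-1)/2$ where the \emph{norm} exponent $(k+1)/2$ should be. Once you fix the target to $\|B^k\|\le (k+1)(\sqrt{d-1})^{k+1}$ and insert the $(d-1)^{-k}$ normalization explicitly, the argument closes; without those two corrections it does not.

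One smaller remark: your claim that the commutant of the regular representation on $\ell^2(E(T_d))$ is ``generated by $B$ and its adjoint'' is not something you need nor is it obviously true (for instance the edge-flip $e\mapsto e^{-1}$ is also equivariant). The paper sidesteps this by never trying to express the Gram kernel $K_n$ as a function of $B$; it bounds the single bilinear form $\langle f,\IB^k f\rangle$ directly by Cauchy--Schwarz and $\|\IB_0^k\|\le\|B^k\|$. That is the cleaner and safer route.
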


It would be possible to start with
the vertex-correlation bound \eqref{eq:corr_decay_for_vertices}
and deduce a somewhat weaker version of \eqref{eq:edge_corr} from that.
This would involve some tedious calculations, however.
Instead, we will apply similar ideas as in \cite{cordec},
where the norms of certain polynomials of the adjacency operator 
were determined to obtain the bound \eqref{eq:corr_decay_for_vertices}. 
Here we will need to work with 
the non-backtracking operator instead of the adjacency operator.

The key observation is that $\E Y_{e_1} Y_{e_2} $ can be expressed
as an inner product on the $L^2$ space over $[0,1]^{V(T_d)}$.
Some kind of a non-backtracking operator can be defined on this space,
and to bound the inner product in question one needs to determine
the norm of the $k$-th power of this operator.
This, however, can be traced back to the case
of the ordinary non-backtracking operator $B$ on $T_d$.

\subsection{The non-backtracking operator on $T_d$}
\label{sec:4.1} 

For an undirected simple graph $G$ let $V(G)$ and $E(G)$ be
the vertex set and the directed edge set of $G$, respectively.
(We assume that $G$ is locally finite.)
For $e,e' \in E(G)$ we write $e \to e'$
if $e = (u,v)$ and $e'=(v,w)$ for some $u,v,w \in V(G)$ with $u \neq w$,
that is, if the head of $e$ coincides with the tail of $e'$ and $e' \neq e^{-1}$.
If this is the case, then we say that $e$ is the \emph{predecessor} of $e'$,
and $e'$ is the \emph{successor} of $e$, see Figure \ref{fig4}. 
\begin{figure}[ht]
\centering
\def\svgwidth{0.25\columnwidth}
\begingroup%
  \makeatletter%
  \providecommand\color[2][]{%
    \errmessage{(Inkscape) Color is used for the text in Inkscape, but the package 'color.sty' is not loaded}%
    \renewcommand\color[2][]{}%
  }%
  \providecommand\transparent[1]{%
    \errmessage{(Inkscape) Transparency is used (non-zero) for the text in Inkscape, but the package 'transparent.sty' is not loaded}%
    \renewcommand\transparent[1]{}%
  }%
  \providecommand\rotatebox[2]{#2}%
  \ifx\svgwidth\undefined%
    \setlength{\unitlength}{338.91660588bp}%
    \ifx\svgscale\undefined%
      \relax%
    \else%
      \setlength{\unitlength}{\unitlength * \real{\svgscale}}%
    \fi%
  \else%
    \setlength{\unitlength}{\svgwidth}%
  \fi%
  \global\let\svgwidth\undefined%
  \global\let\svgscale\undefined%
  \makeatother%
  \begin{picture}(1,0.19048061)%
    \put(0,0){\includegraphics[width=\unitlength,page=1]{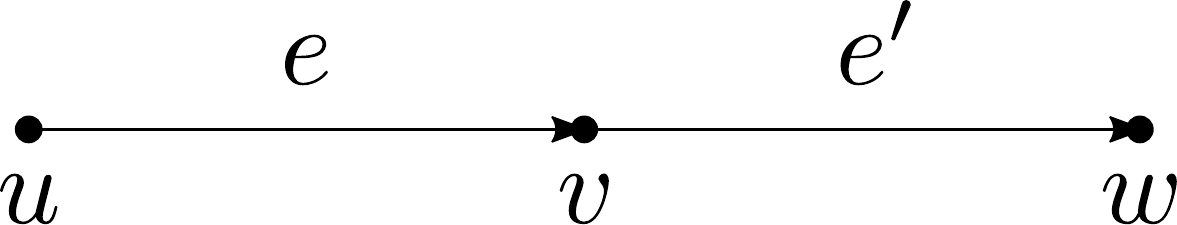}}%
  \end{picture}%
\endgroup%
\caption{$e=(u,v)$ is the predecessor of $e'=(v,w)$}
\label{fig4}
\end{figure}

By \emph{a non-backtracking walk} of length $k$ we mean a sequence of
directed edges $e_0, e_1, \ldots, e_k$ such that $e_i \to e_{i+1}$, $i=0,\ldots,k-1$.
If there exists a non-backtracking walk of length $k$ from $e$ to $e'$,
we write $e \to_k e'$.

There is a corresponding operator on $\ell^2( E(G) )$
called the \emph{non-backtracking operator}.
It is usually denoted by $B = B_G$, and is defined by
\begin{equation} \label{eq:nbt}
(Bf)(e) \defeq \sum_{e' \to e} f(e'), \quad f \in \ell^2( E(G) ), e \in E(G) .
\end{equation}
For regular graphs its spectrum is closely related to that of the adjacency operator,
but the non-backtracking operator often proves to be a more efficient tool, 
for example in understanding the spectral gap and expansion properties of 
random regular graphs \cite{charles, friedman, doron}. 
In \cite{alon}, and more recently, in \cite{anna, kempton}, 
the mixing time and the cutoff phenomenon were examined 
for non-backtracking random walks. 

Here we will need an estimate for the norm of the $k$-th power of
the non-backtracking operator in the special case of $G=T_d$.
It is known that the spectral radius of $B$ is $\sqrt{d-1}$ in this case.
(See \cite{nb_spectrum} for results on the non-backtracking spectrum 
for the universal cover of any finite graph.) 
It immediately follows that
$$ \| B^k \| = \left( \sqrt{d-1} + o(1) \right)^k ,$$
which implies a bound $ \left( \frac{ 1+o(1) }{ \sqrt{d-1} } \right)^k $
for the correlation in Theorem \ref{thm:edge_corr}.
We want to prove, however, a more explicit bound
in order to get a good quantitative result in Theorem \ref{thm:main}.
To this end we need to more carefully estimate the norm of $B^k$.

\begin{theorem} \label{thm:Bnorm}
Let $B$ be the non-backtracking operator of $T_d$.
Then for any positive integer $k$ we have
$$ \| B^k \| \leq (k+1) (\sqrt{d-1})^{k+1} .$$
\end{theorem}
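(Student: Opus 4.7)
The plan is to relate $B$ to the adjacency operator $A$ of $T_d$ via two natural lifts and reduce the problem to the Chebyshev polynomial calculus of $A$. Define $\iota,\tau\colon \ell^2(V(T_d))\to \ell^2(E(T_d))$ by $(\iota g)(e)=g(u)$ and $(\tau g)(e)=g(v)$ for $e=(u,v)$. A direct computation yields $\iota^*\iota=\tau^*\tau=dI$, $\iota^*\tau=\tau^*\iota=A$, and the two key intertwining identities
\[ B\iota = \iota A - \tau, \qquad B\tau = (d-1)\iota. \]
Iterating these gives $B^k\iota = \iota P_k(A) - \tau P_{k-1}(A)$ and $B^k\tau = (d-1)\iota P_{k-1}(A) - (d-1)\tau P_{k-2}(A)$ for all $k\geq 1$, where $P_0=1$, $P_1(x)=x$, and $P_{k+1}(x)=xP_k(x)-(d-1)P_{k-1}(x)$. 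Since $\|A\|=2\sqrt{d-1}$ on $T_d$, the substitution $x=2\sqrt{d-1}\cos\theta$ identifies $P_k$ with a rescaled Chebyshev polynomial of the second kind, and the standard bound $|\sin((k+1)\theta)/\sin\theta|\leq k+1$ gives $\|P_k(A)\|\leq (k+1)(\sqrt{d-1})^k$.

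Next I would split $\ell^2(E(T_d)) = W\oplus K$, where $W$ is the closure of $\iota(\ell^2(V(T_d)))+\tau(\ell^2(V(T_d)))$ and $K=W^\perp=\ker\iota^*\cap\ker\tau^*$. The intertwining identities together with their duals $\iota^*B=(d-1)\tau^*$ and $\tau^*B=A\tau^*-\iota^*$ show that both $W$ and $K$ are $B$-invariant. A further computation gives $BB^*=I+(d-2)\iota\iota^*$ and $B^*B=I+(d-2)\tau\tau^*$, and both restrict to the identity on $K$; hence $B|_K$ is unitary and $\|B^k|_K\|=1$. Since $B$ is block-diagonal with respect to this decomposition, $\|B^k\|=\max(\|B^k|_W\|,1)$, so it suffices to bound $\|B^k|_W\|$.

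For $f=\iota g+\tau h\in W$, the intertwining formulas give $B^k f=\iota g_k+\tau h_k$ with $g_k=P_k(A)g+(d-1)P_{k-1}(A)h$ and $h_k=-P_{k-1}(A)g-(d-1)P_{k-2}(A)h$. Passing to the spectral representation of $A$, both $\|B^k f\|^2$ and $\|f\|^2$ become spectral integrals of quadratic forms on a two-dimensional fibre; on the fibre over $\mu\in[-2\sqrt{d-1},2\sqrt{d-1}]$ the ratio equals the norm of the $2\times 2$ matrix
\[ M_\mu^k = \begin{pmatrix} P_k(\mu) & (d-1)P_{k-1}(\mu) \\ -P_{k-1}(\mu) & -(d-1)P_{k-2}(\mu) \end{pmatrix} \]
with respect to the Gram form $Q_\mu=\begin{pmatrix} d & \mu \\ \mu & d \end{pmatrix}$. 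Combining the Chebyshev bound $|P_j(\mu)|\leq (j+1)(\sqrt{d-1})^j$ with a careful analysis of this $Q_\mu$-weighted norm should yield $\sup_\mu\|M_\mu^k\|_{Q_\mu}\leq (k+1)(\sqrt{d-1})^{k+1}$, completing the proof.

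The main obstacle is obtaining the sharp constant $(k+1)$ in that last step. A naive triangle-inequality argument that separately bounds $\|\iota g_k\|$ and $\|\tau h_k\|$ picks up a condition-number factor from $Q_\mu$, which blows up at $\mu=\pm 2\sqrt{d-1}$, and yields something strictly weaker than claimed. The clean bound requires exploiting the fact that the Jordan-block degeneracy of $M_\mu$ at $\mu=\pm 2\sqrt{d-1}$ occurs precisely where $Q_\mu$ itself becomes nearly degenerate; the two singular behaviours partially cancel, and a direct generalized-eigenvalue computation for the $2\times 2$ problem is needed to extract the factor $(k+1)$ uniformly in $\mu$.
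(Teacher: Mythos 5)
Your approach via the lifts $\iota,\tau$ and the Ihara--Bass-type intertwining identities is genuinely different from the paper's, and the framework you set up is largely sound: the identities $B\iota=\iota A-\tau$, $B\tau=(d-1)\iota$, the recursion $P_{k+1}=xP_k-(d-1)P_{k-1}$, the decomposition $\ell^2(E)=W\oplus K$ with $B|_K$ unitary, and the reduction to a fibrewise $2\times 2$ generalized-eigenvalue problem with Gram form $Q_\mu=\begin{pmatrix}d&\mu\\\mu&d\end{pmatrix}$ are all correct. (One remark you didn't make but should: since $|\mu|\le 2\sqrt{d-1}<d$ for $d\ge 3$, the form $Q_\mu$ has eigenvalues $d\pm\mu\ge(\sqrt{d-1}-1)^2>0$ bounded away from zero, so $\iota\ell^2(V)+\tau\ell^2(V)$ is automatically closed and equals $W$; this is what makes the fibrewise reduction legitimate.)

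However, the proof as written has a genuine gap, and you identify it yourself: the claimed bound $\sup_\mu\|M_\mu^k\|_{Q_\mu}\le(k+1)(\sqrt{d-1})^{k+1}$ is asserted (``should yield'', ``is needed'') but not proven, and this is precisely the content of the theorem once the setup is done. Moreover, your intuition for why the sharp constant should come out --- that the Jordan degeneracy of $M_\mu$ at $\mu=\pm2\sqrt{d-1}$ is ``partially cancelled'' by $Q_\mu$ becoming ``nearly degenerate'' --- is off: as noted above, $Q_\mu$ is uniformly positive definite on the whole spectrum and never degenerates. The factor $(k+1)$ comes solely from the Jordan block of $M_\mu$ at $\theta=0,\pi$ (which makes $\|M_\mu^k\|$ grow like $k(\sqrt{d-1})^k$), and one has to verify that passing from the Euclidean norm to the $Q_\mu$-norm does not inflate this by more than a bounded constant uniformly in $\mu$; that $2\times 2$ computation remains to be done. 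The paper's proof avoids all of this machinery: it bounds $\langle B^kf,g\rangle$ directly by the AM--GM inequality with weights $\alpha_{e,e'}=(\sqrt{d-1})^{-(h(e')-h(e))}$ defined via a height function from a fixed root, then bounds the resulting weighted row/column sums by counting non-backtracking paths --- a purely combinatorial argument with no spectral theory. Your route is more structural and would also reveal the spectral decomposition of $B$, but you must carry out the final $2\times2$ estimate to have a complete proof.
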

We postpone the proof until Section \ref{sec:Bnorm}.

\subsection{Decomposition of the Koopman representation}
In this section we briefly explain how the $\Ga$-action on $L^2(M^S, \mu^S)$
can be decomposed into the sum of quasi-regular representations of $\Ga$.
(This can be found in \cite[Section 3]{kechris_tsankov}.
See also \cite[Theorem 2.1, Corollary 2.2]{lyons_nazarov} and
\cite[Section 3.2]{V} for the special case when $S$ is the vertex set of a Cayley graph.)
This will help us to understand the behavior of the
non-backtracking operator on $L^2(M^S, \mu^S)$ in the next section.

A group $\Ga$ acts naturally on the left cosets of
a subgroup $\Delta \leq \Ga$. The corresponding
$\Ga$-action on $\ell^2( \Ga / \Delta)$ is
called \emph{quasi-regular representation}.
If $\Delta$ is the trivial subgroup,
we get the \emph{regular representation} of $\Ga$ on $\ell^2(\Ga)$.
It is easy to see that if $\Delta$ is finite
(which will always be the case in our setting),
then the quasi-regular representation is a subrepresentation of
the regular representation, see \cite[Lemma 3.3]{kechris_tsankov}.

Now let $M$ be a measurable space and $\mu$ a probability measure on $M$.
Suppose that the Hilbert space $L^2(M, \mu)$
has a countable orthonormal basis: $g_0, g_1, g_2, \ldots$,
where $g_0$ will be assumed to be the constant $1$ function.
(The same would work for atomic measures $\mu$
but with a finite orthonormal basis.)
For a countable set $S$ let $\nu$ be the product measure $\mu^S$ on $M^S$.
First we construct an orthonormal basis for $L^2(M^S, \nu)$.
By $\II$ we denote the set of finitely supported $S \to \{0,1,2,\ldots\}$ functions.
For each $q \in \II$ we define an $M^S \to \IR$ function:
$$ W_q(\omega) \defeq \prod_{s \in S} g_{q(s)} \left( \omega_s \right)
\mbox{ for any } \omega = \left( \omega_s \right)_{s \in S} .$$
Note that this is actually a finite product,
since all but finitely many terms are equal to $g_0 \equiv 1$.
According to \cite[Lemma 3.1]{kechris_tsankov}
the functions $W_q$, $q\in \II$ form an orthonormal basis of $L^2(M^S, \nu)$.

Suppose that a countable group $\Ga$ acts on $S$.
Recall that \eqref{eq:action} defines a $\Ga$-action on $M^S$,
which, in turn, induces a $\Ga$-action on $L^2(M^S, \nu)$:
$$ \left( \ga \cdot f \right) (\omega) \defeq
f\left( \ga^{-1} \cdot \omega \right),
\quad f \in L^2(M^S, \nu), \omega \in M^S, \ga \in \Ga .$$
(In representation theory the $\Ga$-action on
$L^2(M^S, \nu)$ is called the \emph{Koopman representation}.)

One can define a $\Ga$-action on $\II$ along the same lines:
$$ \left( \ga \cdot q \right)(s) \defeq q \left( \ga^{-1} \cdot s \right),
\quad q \in \II, s \in S, \ga \in \Ga .$$
The $\Ga$-actions on $\II$ and $L^2(M^S, \nu)$ are compatible in the sense that
\begin{equation} \label{eq:comp_actions}
W_{ \ga \cdot q } = \ga \cdot W_q \mbox{ for any } \ga \in \Ga \mbox{ and } q \in \II .
\end{equation}
It follows that the Koopman representation is equivalent to
the $\Ga$-action on $\ell^2( \II )$.
Given any $q \in \II$, the subspace in $\ell^2(\II)$
corresponding to the orbit $\Ga \cdot q = \{ \ga \cdot q \, : \, \ga \in \Ga\}$
will be invariant under the $\Ga$-action,
and the restriction of the action to this subspace is
equivalent to the quasi-regular representation on $\ell^2(\Ga / \Ga_q)$,
where $\Ga_q$ denotes the stabilizer of $q$.
It follows that the $\Ga$-action on $\ell^2(\II)$ is equivalent to
a direct sum of quasi-regular representations \cite[Proposition 3.2]{kechris_tsankov}.

\subsection{Generalized non-backtracking operator}

We will use the above observations for the case when $\Ga$
somehow corresponds to the directed edge set $E(T_d)$ of the $d$-regular tree.
We claim that there exists a subgroup $\Ga \leq \Aut(T_d)$
that acts \emph{sharply transitively} on the directed edges:
for any pair $e_1, e_2$ of directed edges,
$\Ga$ has a unique element $\ga$ taking $e_1$ to $e_2$.
For a simple proof, draw $T_d$ in the plane and consider all graph
automorphisms $\Phi$ of $T_d$ that preserve the order and the orientation of neighbors
(that is, if $v \in V(T_d)$ has neighbors $v_1, \ldots, v_d$ in a clockwise order,
then $\Phi(v_1), \ldots, \Phi(v_d)$ should be
the neighbors of $\Phi(v)$ also in a clockwise order).
Such automorphisms clearly form a subgroup of $\Aut(T_d)$.
It is also easy to see that prescribing
the image of a directed edge uniquely determines $\Phi$.

In this section $\Ga$ will be a fixed subgroup with the above properties.
There is a one-to-one correspondence between $\Ga$ and $E(T_d)$:
pick a fixed distinguished directed edge $\bar{e}$, and for any $e \in E(T_d)$
let $\ga_e \in \Ga$ be the unique element that takes $e$ to $\bar{e}$. 
The next claim describes the group elements corresponding 
to the predecessors and successors of $e$. 
\begin{claim} \label{cl}
Let $\bar{e}_1, \ldots, \bar{e}_{d-1}$ denote the $d-1$ successors of $\bar{e}$,
that is, $\bar{e} \to \bar{e}_i$, $i=1, \ldots, d-1$.
Then for any directed edge $e$:
\begin{enumerate}[(a)]
\item $\displaystyle \ga_{\bar{e}_i} \ga_e $, $i=1,\ldots,d-1$ correspond to the successors of $e$;
\item $\displaystyle \ga_{\bar{e}_i}^{-1} \ga_e$, $i=1,\ldots,d-1$ correspond to the predecessors of $e$.
\end{enumerate}
\end{claim}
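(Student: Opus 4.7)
The plan is to unpack the phrase ``$\delta \in \Ga$ corresponds to the directed edge $e'$''. By the definition of the bijection $e' \mapsto \ga_{e'}$, this means $\delta = \ga_{e'}$, which in turn is equivalent to $\delta \cdot e' = \bar{e}$, i.e.\ $e' = \delta^{-1} \cdot \bar{e}$. So verifying (a) reduces to checking that the edges
$$(\ga_{\bar{e}_i} \ga_e)^{-1} \cdot \bar{e} \;=\; \ga_e^{-1} \ga_{\bar{e}_i}^{-1} \cdot \bar{e} \;=\; \ga_e^{-1} \cdot \bar{e}_i \quad (i=1,\dots,d-1)$$
are exactly the $d-1$ successors of $e$, and verifying (b) reduces to checking that the edges
$$(\ga_{\bar{e}_i}^{-1} \ga_e)^{-1} \cdot \bar{e} \;=\; \ga_e^{-1} \ga_{\bar{e}_i} \cdot \bar{e} \quad (i=1,\dots,d-1)$$
are exactly the $d-1$ predecessors of $e$.

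Both verifications rest on a single observation: the relation $e \to e'$ is defined purely in graph-theoretic terms and is therefore preserved by every element of $\Aut(T_d) \supseteq \Ga$. Since $\ga_e^{-1} \cdot \bar{e} = e$, the automorphism $\ga_e^{-1}$ bijectively maps successors of $\bar{e}$ onto successors of $e$ and predecessors of $\bar{e}$ onto predecessors of $e$.

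For (a), the edges $\bar{e}_1,\dots,\bar{e}_{d-1}$ are by hypothesis the complete list of successors of $\bar{e}$, so their images $\ga_e^{-1} \cdot \bar{e}_i$ form the complete list of successors of $e$, as desired.

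For (b), the key intermediate step is to show that $\ga_{\bar{e}_i} \cdot \bar{e}$, $i=1,\dots,d-1$, enumerate the $d-1$ predecessors of $\bar{e}$. Since $\bar{e} \to \bar{e}_i$, applying the automorphism $\ga_{\bar{e}_i}$, which sends $\bar{e}_i$ to $\bar{e}$, yields $\ga_{\bar{e}_i} \cdot \bar{e} \to \bar{e}$, so each $\ga_{\bar{e}_i} \cdot \bar{e}$ is a predecessor of $\bar{e}$. Distinctness follows from sharp transitivity: if $\ga_{\bar{e}_i} \cdot \bar{e} = \ga_{\bar{e}_j} \cdot \bar{e}$, then $\ga_{\bar{e}_j}^{-1} \ga_{\bar{e}_i}$ fixes $\bar{e}$, hence equals the identity, hence $\bar{e}_i = \bar{e}_j$. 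This produces $d-1$ distinct predecessors of $\bar{e}$, which therefore exhaust all of them. Applying the bijection $\ga_e^{-1}$ then carries these to the full list of predecessors of $e$, completing the proof of (b). I do not foresee any real obstacle here; the argument is essentially a bookkeeping exercise combining sharp transitivity of $\Ga$ with the automorphism-invariance of the predecessor/successor relation.
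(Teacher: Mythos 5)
Your proof is correct and follows essentially the same route as the paper: both arguments hinge on the facts that any $\ga \in \Ga \subseteq \Aut(T_d)$ preserves the predecessor/successor relation, and that sharp transitivity of $\Ga$ on $E(T_d)$ turns the identity $\ga_e \cdot e = \bar{e}$ into a dictionary between group elements and edges. The only stylistic difference is one of direction (you fix $i$ and identify the corresponding edge, while the paper fixes a successor/predecessor of $e$ and finds the matching $i$) and that you spell out the distinctness argument via sharp transitivity where the paper says ``it is easy to see''; your version is a touch more explicit but not a different proof.
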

\begin{proof}
Since $\ga_e$ is a graph automorphism taking $e$ to $\bar{e}$, 
it follows that $\ga_e$ takes the successors of $e$ to the successors 
$\bar{e}_1, \ldots, \bar{e}_{d-1}$ of $\bar{e}$, and by definition 
$\ga_{\bar{e}_i}$ takes $\bar{e_i}$ to $\bar{e}$, see Figure \ref{fig5}. 
This proves (a). 
\begin{figure}[ht]
\centering
\def\svgwidth{0.5\columnwidth}
\begingroup%
  \makeatletter%
  \providecommand\color[2][]{%
    \errmessage{(Inkscape) Color is used for the text in Inkscape, but the package 'color.sty' is not loaded}%
    \renewcommand\color[2][]{}%
  }%
  \providecommand\transparent[1]{%
    \errmessage{(Inkscape) Transparency is used (non-zero) for the text in Inkscape, but the package 'transparent.sty' is not loaded}%
    \renewcommand\transparent[1]{}%
  }%
  \providecommand\rotatebox[2]{#2}%
  \ifx\svgwidth\undefined%
    \setlength{\unitlength}{268.8bp}%
    \ifx\svgscale\undefined%
      \relax%
    \else%
      \setlength{\unitlength}{\unitlength * \real{\svgscale}}%
    \fi%
  \else%
    \setlength{\unitlength}{\svgwidth}%
  \fi%
  \global\let\svgwidth\undefined%
  \global\let\svgscale\undefined%
  \makeatother%
  \begin{picture}(1,0.82768819)%
    \put(0,0){\includegraphics[width=\unitlength,page=1]{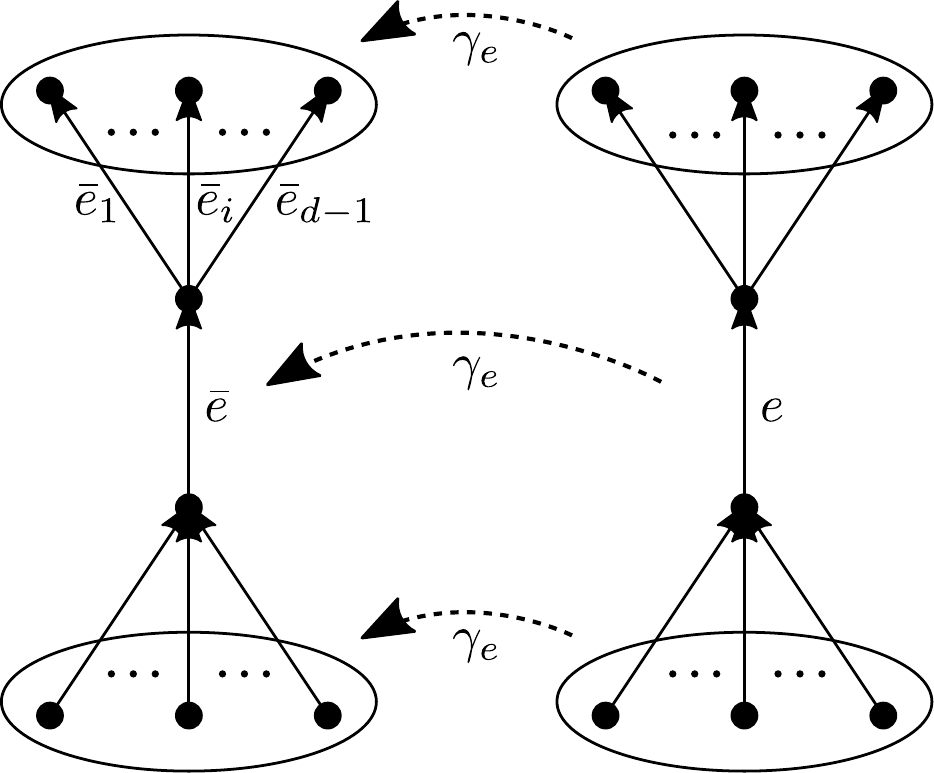}}%
  \end{picture}%
\endgroup%
\caption{Successors and predecessors of $e$}
\label{fig5}
\end{figure}
Similarly, $\ga_e$ takes the predecessors of $e$ to the predecessors of $\bar{e}$ 
(in some order), and it is easy to see that for any predecessor of $\bar{e}$ 
there is a unique $i$ for which $\displaystyle \ga_{\bar{e}_i}^{-1}$ takes 
that predecessor to $\bar{e}$, which clearly proves (b).
\end{proof}
Using the automorphisms $\ga_{\bar{e}_i}$
we can define some kind of a non-backtracking operator $\IB = \IB_\Omega$
on $L^2(\Omega, \nu)$ for any $\Omega$ with a $\Ga$-action:
\begin{equation} \label{eq:gen_nbto}
\IB f \defeq \sum_{i=1}^{d-1} \ga_{\bar{e}_i} \cdot f .
\end{equation}

If $\Omega$ is $\Ga$ itself (with the natural action and $\nu$ being the counting measure),
then $L^2(\Omega, \nu) = \ell^2(\Ga)$, and using (b) in the above claim we get
$$ (\IB f)(\ga_e) = \sum_{i=1}^{d-1} f( \ga_{\bar{e}_i}^{-1} \ga_e) =
\sum_{e' \to e} f(\ga_{e'}) ,$$
which means that in this case $\IB$ is unitarily equivalent to
the ordinary non-backtracking operator $B$ on $\ell^2( E(T_d) )$,
recall \eqref{eq:nbt}.

As for the case when $\Omega$ is $\Ga / \Delta$ for some finite subgroup $\Delta \leq \Ga$,
we saw in the previous section that the $\Ga$-action on $\ell^2( \Ga / \Delta )$
will be a subrepresentation of the regular representation on $\ell^2(\Ga)$,
therefore $\IB$ will be unitarily equivalent to $\left. B \right|_{H}$,
where $B$ is the non-backtracking operator of $T_d$ and
$H$ is some invariant subspace of $\ell^2( E(T_d) )$.

Now let $\Omega$ be $M^S$ with $\nu$ being some product measure $\mu^S$
for a countable set $S$ with a $\Ga$-action.
The only additional assumption we will need is that
the stabilizer of any $s \in S$ is finite.

Let $\II$ be as in the previous section.
We have seen that there is an isometry between
$L^2(M^S, \nu)$ and $\ell^2(\II)$ that preserves the $\Ga$-action.
For $0 \equiv q_0 \in \II$, the orbit of $q_0$ has only one element
(hence the stabilizer $\Ga_{q_0}$ is the whole group $\Ga$).
The corresponding invariant subspace in $L^2(M^S, \nu)$ is
the space of constant functions. Let us focus on the
restriction $\IB_0$ of the non-backtracking operator $\IB$
to the orthogonal complement of the constant functions:
$$ L_0^2(M^S, \nu) \defeq
\left\{ f \in L^2(M^S, \nu) \, : \, \int f \, \mathrm{d} \nu = 0 \right\} .$$
For any other $q_0 \neq q \in \II$ the stabilizer $\Ga_q$ is finite.
It follows from the previous discussion that
$\IB_0$ is unitarily equivalent to the direct sum of
restrictions of $B$ to various invariant subspaces.
Therefore we have the same bound for $\| \IB_0^k \|$ as
we had for $\| B^k \|$ in Theorem \ref{thm:Bnorm}.

We will also need the following formula for $\IB^k$ 
which is an immediate consequence of part (a) of Claim \ref{cl} and
the definition \eqref{eq:gen_nbto} of $\IB$:
\begin{equation} \label{eq:Bk}
\IB^k f = \sum_{\bar{e} \to_k e} \ga_{e} \cdot f .
\end{equation}

\subsection{Edge correlations}

Now we turn to the proof of Theorem \ref{thm:edge_corr}.
Let $\Ga$ still denote a subgroup of $\Aut(T_d)$
that acts sharply transitively on the directed edge set $E(T_d)$.
Set $M = [0,1]$ with $\mu$ being the Lebesgue measure, and $S = V(T_d)$.
(In fact, we can work with arbitrary $M, \mu$,
and any $S$ with an $\Aut(T_d)$-action on it
such that the stabilizer of any element intersected with $\Ga$ is finite.)

Let $F \colon M^S \to \IR^{E(T_d)}$ be an $\Aut(T_d)$-factor.
Then for an i.i.d.\ process $Z$ on $M^S$,
$F(Z) = Y = \left( Y_e \right)_{e \in E(T_d)}$ will be
a factor of i.i.d.\ process on $\IR^{E(T_d)}$.

Let $\pi_e \colon \IR^{E(T_d)} \to \IR$ denote
the coordinate projection corresponding to $e \in E(T_d)$. 
Recall that $\bar{e}$ is a fixed distinguished directed edge 
and that $\ga_e \in \Ga$ takes $e$ to $\bar{e}$. 
As we saw in Section \ref{sec:2}, the rule $f \defeq \pi_{\bar{e}} \circ F$
determines $F$. Straightforward calculation shows that
$$ \ga_e \cdot f = \pi_e \circ F \mbox{ implying that }
Y_e = (\ga_e \cdot f)(Z) .$$
Combining this with \eqref{eq:Bk} we get that
$$ \left( \IB^k f \right) (Z) = \sum_{\bar{e} \to_k e} Y_e .$$

The assumption in Theorem \ref{thm:edge_corr} that the variance of $Y_e$ is finite
is equivalent to $f$ being in $L^2(M^s, \nu)$.
Then $\var Y_e = \| f \|_2^2$ for each $e \in E(T_d)$.
Also, since adding a constant does not change the correlation,
we might assume that $\E Y_e = 0$, which means that $f \in L_0^2(M^s, \nu)$. Then
$$ \left\langle f, \IB^k f \right\rangle = \E f(Z) (\IB^k f)(Z) =
\E Y_{\bar{e}} \left( \sum_{\bar{e} \to_k e} Y_e \right) =
\sum_{\bar{e} \to_k e} \E Y_{\bar{e}} Y_e .$$
Since $Y$ is $\Aut(T_d)$-invariant,
$\E Y_{e_1} Y_{e_2}$ is the same for any edges $e_1 \to_k e_2$
(that is, any edges $e_1, e_2$ of distance $k$ and
``pointing to the same direction'').
Therefore the sum on the right-hand side is actually equal to
$(d-1)^k \E Y_{e_1} Y_{e_2} = (d-1)^k \cov( Y_{e_1}, Y_{e_2} )$. 
Using this and the fact that $\| \IB_0^k \| \leq \| B^k \| $ (see the previous section), 
as well as the bound for $\| B^k \|$ in Theorem \ref{thm:Bnorm} we obtain that 
\begin{multline*}
(d-1)^k \left| \cov\left( Y_{e_1}, Y_{e_2} \right) \right| = 
\left| \left\langle f, \IB^k f \right\rangle \right| \leq
\| f \|_2 \cdot \| \IB^k f \|_2 \leq \| \IB^k_0 \|_2 \cdot \| f \|_2^2 \\
\leq (k+1) (\sqrt{d-1})^{k+1} \sqrt{\var(Y_{e_1}) \var(Y_{e_2}) } .
\end{multline*}
Therefore Theorem \ref{thm:edge_corr} follows for the case 
when $e_1$ and $e_2$ point to the same direction.
As for the case when they point away from or towards each other,
we need to do the same for the scalar product
$\left\langle \ga \cdot f, \IB^k f \right\rangle$ and
$\left\langle f, \IB^k ( \ga \cdot f) \right\rangle$, respectively,
where $\ga \in \Ga$ is the unique element that flips $\bar{e}$
(that is, takes $\bar{e}$ to its inverse). 
Since $f \mapsto \ga \cdot f$ is a unitary operator for any fixed $\ga$,
we get the same bound for these scalar products as well.

\subsection{The norms of the powers of the non-backtracking operator}
\label{sec:Bnorm}

In this section we give a proof for Theorem \ref{thm:Bnorm}.
We will follow the arguments presented in \cite[Theorem 4.2]{nb_spectrum}
where they bounded the spectral radius of
the non-backtracking operator on an arbitrary tree without leaves. 

The norm $\|B^k\|$ can be computed as
$$\|B^k\| = \sup_{\|f\| = \|g\| = 1} \langle B^k f, g \rangle,$$
where $\| . \|$ and $\langle . , . \rangle$ denote the standard norm and inner product on $\ell^2(E(G))$. Let us expand this inner product using the definition of $B$.
$$\langle B^k f, g \rangle = \sum_e (B^k f)(e) g(e) =
\sum_{e'{\rightarrow_k} e} f(e')g(e) \le \sum_{e'{\rightarrow_k} e}
|f(e')g(e)|$$
$$\le \sum_{e'{\rightarrow_k} e}
\frac{1}{2}\left(\frac{1}{\alpha_{e,e'}}f^2(e') + \alpha_{e,e'}g^2(e)\right).$$
The last bound is based on the inequality of arithmetic and geometric
means. Note that there is freedom in choosing the positive constants
$\alpha_{e,e'}$ individually for every pair $e'{\rightarrow_k}e$,
which we will discuss later. We
may collect the terms $f^2(e')$ and $g^2(e)$ together:
\begin{equation}
\label{Bnorm-alphabound}
\begin{aligned}
\langle B^k f, g \rangle &\le \frac{1}{2}\sum_{e'} \left(
  \sum_{e'{\rightarrow_k} e} \frac{1}{\alpha_{e,e'}}\right)f^2(e') +  \frac{1}{2}\sum_{e} \left(
  \sum_{e'{\rightarrow_k} e} \alpha_{e,e'}\right) g^2(e)\\
&\le \frac{1}{2} \sup_{e'} \left(
  \sum_{e'{\rightarrow_k} e} \frac{1}{\alpha_{e,e'}}\right) \|f\|^2 + \frac{1}{2} \sup_{e} \left(
  \sum_{e'{\rightarrow_k} e} \alpha_{e,e'}\right) \|g\|^2.
\end{aligned}
\end{equation}

Our goal now is to choose $\alpha_{e,e'}$ in a way 
that the above suprema are as small as possible. 
To this end we fix a root $o$ in $T_d$ and
define the ``spheres'' around this root as
$$V_i \defeq \left\{ v\in V ~|~ d(v,o) = i \right\}.$$
Similarly, we can partition the edges based on their distance from the root:
$$H_i \defeq \left\{ e = (u,v) ~|~ u\in V_{i-1},~v\in V_i \right\} \cup
\left\{ e = (u,v) ~|~ u\in V_{i},~v\in V_{i-1} \right\}.$$
This allows us to define the height of an edge as
$$h(e) \defeq i,~\textrm{if}~e \in H_i.$$
Finally, we set 
$$\alpha_{e,e'} \defeq \left( \frac{1}{ \sqrt{d-1} } \right)^{h(e')-h(e)}.$$
Next we compute what bounds we get for the suprema above.

\begin{claim}
\label{cl:alpha_bound}
For any $e'\in E(G)$ we have
$$\sum_{e'{\rightarrow_k} e} \frac{1}{\alpha_{e,e'}} < (k+1) \sqrt{d-1}^{k+1}$$
\end{claim}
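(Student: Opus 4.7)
The plan is to enumerate all non-backtracking walks of length $k$ starting from $e'$ according to the shape of the corresponding simple path in $T_d$, and carefully track how the edge heights evolve along the walk. Writing the walk as $e_0 = e' \to e_1 \to \dots \to e_k$ with $e_i = (v_i, v_{i+1})$, the vertices $v_0, v_1, \dots, v_{k+1}$ form a simple path of length $k+1$ in $T_d$, with $v_0, v_1$ fixed by $e'$. Since $T_d$ is a tree, there is a unique vertex $v_m$ on this path of minimal level (closest to the root $o$); the path descends monotonically in level from $v_0$ to $v_m$ and then ascends monotonically from $v_m$ to $v_{k+1}$. Call $m$ the \emph{turning index} and set $j \defeq h(e')$.

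First I dispose of the case where $v_1$ is the deeper endpoint of $e'$ (so $v_1$ is at level $j$): then every non-backtracking continuation must go to a child, $m = 0$, and $h(e_k) = j + k$, so this regime contributes $(d-1)^k (\sqrt{d-1})^{-k} = (\sqrt{d-1})^k$, far below the bound. In the opposite orientation, where $v_1$ is shallower than $v_0$, I enumerate walks by $m \in \{1,\dots,k+1\}$. In the \emph{generic regime} ($1 \le m \le k$ and $v_m \neq o$) a walk is built by going up from $v_1$ to $v_m$ in the unique way (each step taking the parent of the current vertex), then executing the turning step $e_m$ in one of $d-2$ ways (the children of $v_m$ other than $v_{m-1}$), then taking $k-m$ down-steps with $d-1$ choices each, giving $(d-2)(d-1)^{k-m}$ walks. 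A direct computation shows $h(e_k) = j + k - 2m + 1$ for these walks.

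The crucial algebraic cancellation is
\[
(d-2)(d-1)^{k-m} \cdot (\sqrt{d-1})^{2m-k-1} = (d-2)(\sqrt{d-1})^{k-1},
\]
independent of $m$. Summing over the (at most $k$) values of $m$ in the generic regime yields a contribution of at most $k(d-2)(\sqrt{d-1})^{k-1}$. There are then two boundary regimes, at most one of which occurs for a given $j$: the \emph{all-up walk} ($m = k+1$, available only when $j \geq k+1$) gives a single walk contributing $(\sqrt{d-1})^k$; and the \emph{root-turning walk} ($m = j$, available only when $j \leq k$), where $v_m = o$ has no parent to exclude, produces $(d-1)^{k-j+1}$ walks (one extra factor of $d-1$ compared to generic turnings) contributing $(\sqrt{d-1})^{k+1}$. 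In both cases the boundary contribution is at most $(d-1)(\sqrt{d-1})^{k-1}$.

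Adding everything together, the total sum is at most $[k(d-2) + (d-1)](\sqrt{d-1})^{k-1}$. The identity $k(d-2) + (d-1) = (k+1)(d-1) - k$ shows this is strictly less than $(k+1)(d-1)(\sqrt{d-1})^{k-1} = (k+1)(\sqrt{d-1})^{k+1}$, the claimed bound. The main technical obstacle is the careful treatment of walks passing near the root: the turning step at $o$ has one extra choice compared to generic turnings, and the all-up walk has an atypical height at $e_k$; identifying exactly one boundary contribution (depending on whether $j > k$ or $j \leq k$) and showing it does not overwhelm the generic sum is what makes the argument sharp enough to yield the bound.
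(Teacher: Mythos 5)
Your proof is correct and follows essentially the same approach as the paper: decompose the non-backtracking walks by the index of the turning step (the paper's parameter $l$ equals your $m-1$), observe the cancellation giving $(d-2)(\sqrt{d-1})^{k-1}$ per generic turning index, and bound the number of indices by $k+1$. You are somewhat more careful than the paper in spelling out the boundary cases $h(e')\le k$ (the root-turning walk and the excluded values of $m$), which the paper dispatches with a brief remark, and your algebraic identity $k(d-2)+(d-1)<(k+1)(d-1)$ makes the final step cleaner.
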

\begin{proof}
We need to browse through all the different configurations of $e$ and
$e'$.

If $e'$ is directed away from the root, then all the edges $e$ reached in
$k$ steps will be $k$ levels above $e'$. Indeed, there is no
possibility to turn back as these are non-backtracking paths. 
This way we reach $(d-1)^k$ different edges, and we get
$$\sum_{e'{\rightarrow_k} e} \frac{1}{\alpha_{e,e'}} = (d-1)^k
\left(\frac{1}{\sqrt{d-1}}\right)^k = \sqrt{d-1}^k.$$

Let us now check an edge $e'$ which is pointing towards the root. 
First we assume that $h(e')>k$. The edges $e$ reached can be
either found going all the way down, or taking $l=0,1,\ldots,k-1$
steps down and then turning up for another $k-l$ steps.

When going simply downwards, there is one such $e$ to reach, with a
height decrease of $k$, thus contributing to the sum by
$$\sqrt{d-1}^k$$
When turning back after $l$ steps, we reach $(d-2)(d-1)^{k-l-1}$
edges, and the height increase is $-l + (k-l-1) = k-1-2l$. The overall
contribution for this $l$ is
$$(d-2)(d-1)^{k-l-1} \left(\frac{1}{\sqrt{d-1}}\right)^{k-1-2l} =
(d-2)\sqrt{d-1}^{k-1} < \sqrt{d-1}^{k+1}.$$
This bound is valid for all $l$ separately, therefore combining
the contributions of all the cases (going all the way down or
turning back after $l=0,1,\ldots,k-1$ steps) we get
$$\sum_{e'{\rightarrow_k} e} \frac{1}{\alpha_{e,e'}} <  (k+1) \sqrt{d-1}^{k+1}.$$

The only case remaining is when $e'$ is pointing towards the root, but
$h(e')\le k$. It is easy to verify that the above method works again,
but some values of $l$ are excluded, and once the intermediate $(d-2)$ factor
increases to $(d-1)$. Nevertheless, the same final bound holds.

We checked all the cases for $e,e'$ and confirmed the stated bound for
every possibility.
\end{proof}

\begin{claim}
\label{cl:alpha_bound2}
For any $e'\in E(G)$ we have
$$\sum_{e'{\rightarrow_k} e} {\alpha_{e,e'}} < (k+1) \sqrt{d-1}^{k+1}$$
\end{claim}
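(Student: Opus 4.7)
My plan is to reduce Claim~\ref{cl:alpha_bound2} directly to the already-established Claim~\ref{cl:alpha_bound} via the inversion symmetry $e \mapsto e^{-1}$ on $E(T_d)$, thereby avoiding any duplicated case analysis. The point is that Claim~\ref{cl:alpha_bound2} is the ``dual'' of Claim~\ref{cl:alpha_bound} under this involution, so after a few symmetry checks no new combinatorial work is required.

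First I would verify three elementary invariances. Heights are preserved, $h(e) = h(e^{-1})$, because $H_i$ was defined to contain both orientations of each undirected edge at distance $i$ from the root. Reversing the sequence of edges in a non-backtracking walk (and inverting each) produces another non-backtracking walk, so $e' \to_k e$ if and only if $e^{-1} \to_k e'^{-1}$. Combining these with the definition gives
\[
\alpha_{e,e'} = (1/\sqrt{d-1})^{h(e') - h(e)} = (1/\sqrt{d-1})^{h(e'^{-1}) - h(e^{-1})} = 1/\alpha_{e'^{-1}, e^{-1}}.
\]

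Next I would perform the substitution $f := e'^{-1}$, $g := e^{-1}$ in the sum from Claim~\ref{cl:alpha_bound2}: the walk condition $e' \to_k e$ becomes $g \to_k f$, and the summand $\alpha_{e,e'}$ becomes $1/\alpha_{f,g}$. Hence
\[
\sum_{e' \to_k e} \alpha_{e,e'} \;=\; \sum_{g \to_k f} \frac{1}{\alpha_{f, g}},
\]
which is exactly the sum bounded in Claim~\ref{cl:alpha_bound} (now with $g$ playing the role of the fixed edge). The estimate $(k+1)(\sqrt{d-1})^{k+1}$ therefore transfers verbatim.

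There is no real obstacle: once the three invariances above are noted, the remainder is bookkeeping. The only subtle point is to track which variable is held fixed under the bijection, so that the ``fixed'' edge on one side genuinely corresponds to the ``fixed'' edge on the other side in the sense required by the outer supremum in \eqref{Bnorm-alphabound}. As a sanity check one could alternatively redo the case analysis of Claim~\ref{cl:alpha_bound} with the roles of $e$ and $e'$ swapped and with $\alpha$ in place of $1/\alpha$; this works but merely duplicates the earlier computation.
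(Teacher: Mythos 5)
Your reduction via the involution $e \mapsto e^{-1}$ is correct, and it is genuinely different from what the paper does. The paper simply states that ``basically the same proof works'' and asks the reader to redo the case analysis of Claim~\ref{cl:alpha_bound} with the opposite orientation conventions; you instead observe that no new computation is needed at all, because the three invariances you list ($h(e)=h(e^{-1})$, $e'\to_k e \iff e^{-1}\to_k e'^{-1}$, and $\alpha_{e,e'}=1/\alpha_{e'^{-1},e^{-1}}$) make the two sums identical after the change of variables. This is cleaner and less error-prone than reproducing the case analysis, and it makes the ``small adjustment due to the change of orientations'' in the paper's proof completely explicit.

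One thing worth stating more forcefully than you do: the point you call ``subtle'' is in fact essential, and it also reveals a small slip in the paper's formulation. Your substitution $g:=e^{-1}$, $f:=e'^{-1}$ turns the sum into $\sum_{g\to_k f}1/\alpha_{f,g}$ \emph{with $g$ held fixed}, which is Claim~\ref{cl:alpha_bound} only if $e$ (equivalently $g$) was the fixed edge to begin with. That is indeed what \eqref{Bnorm-alphabound} requires, since the second term there is $\sup_e\bigl(\sum_{e'\to_k e}\alpha_{e,e'}\bigr)\|g\|^2$. However, the claim as literally worded (``For any $e'\in E(G)$\,\dots'') suggests $e'$ is fixed and the sum runs over $e$; read that way the statement is actually false. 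For instance, taking $e'$ directed away from the root, every $e$ with $e'\to_k e$ sits $k$ levels above $e'$, giving $(d-1)^k\cdot(\sqrt{d-1})^{k}=(\sqrt{d-1})^{3k}$, which exceeds $(k+1)(\sqrt{d-1})^{k+1}$ for $k\ge 3$. So your care in matching the fixed edge to the outer supremum of \eqref{Bnorm-alphabound} is doing real work here, not mere bookkeeping, and it would strengthen your write-up to say so.
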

\begin{proof}
Basically the same proof works as in Claim \ref{cl:alpha_bound}, 
only a small adjustment needs to be made due to the change of orientations. 
\end{proof}

Plugging the bounds from Claim \ref{cl:alpha_bound} and \ref{cl:alpha_bound2}
into \eqref{Bnorm-alphabound} we get
$$\langle B^k f, g \rangle \le \frac{1}{2}(k+1)
\sqrt{d-1}^{k+1}\|f\|^2 + \frac{1}{2}(k+1)
\sqrt{d-1}^{k+1}\|g\|^2$$
$$ = (k+1)\sqrt{d-1}^{k+1} ,$$
and this is exactly the bound we were aiming for.


\bibliographystyle{plain}
\bibliography{refs}

\end{document}